\theoremstyle{thmstyleone}%
\newtheorem{theorem}{Theorem}
\theoremstyle{thmstyletwo}%
\newtheorem{remark}{Remark}%
\theoremstyle{thmstylethree}%
\newtheorem{lemma}{Lemma}
\begin{document}

\title[Stability of a porous system with Gurtin-Pipkin law]{On the time behavior of a porous thermoelastic system with
	only thermal dissipation given by Gurtin-Pipkin law}


\author[1]{\fnm{Afaf} \sur{Ahmima}}\email{afafahmima@gmail.com}

\author*[1]{\fnm{Abdelfeteh} \sur{Fareh}}\email{farehabdelf@gmail.com}
\equalcont{These authors contributed equally to this work.}

\affil[1]{\orgdiv{Laboratory of operators theory and PDEs: foundations and applications}, \orgname{University of El Oued}, \orgaddress{\street{P.O.B. 789}, \city{El Oued}, \postcode{39000},  \country{Algeria}}}


\abstract{In the present paper we consider a porous thermoelastic system with only one dissipative mechanism generated by the heat conductivity modelled by the Gurtin-Pipkin thermal law.
	By the use of a semigroup approach and the Lumer-Phillips theorem we prove the existence of a unique solution. We introduce a stability number $\chi_g$ depends on the coefficients of the system, and establish an exponential stability result provided that  $\chi_g=0$. Otherwise, if $\chi_g\ne 0$, we prove the lack of exponential decay. Our result improves and generalizes the previous results in the literature obtained for Fourier's and Cattaneo's laws of thermal conductivity.
}

\keywords{Porous material, Gurtin-Pipkin law, well-posedness, stability number, exponential stability.}


\pacs[MSC Classification]{35B40, 47D03, 74D05, 74F15}

\maketitle

\section{Introduction}

In this paper we are concerned by the following porous thermelastic system%
\begin{equation}
	\left\{ 
	\begin{array}{ll}
		\rho u_{tt}=au_{xx}+b\phi _{x} & {in}~~ (0,\pi )\times(0,\infty ), \\ 
		J\phi _{tt}=\alpha \phi _{xx}-bu_{x}-\xi \phi -\beta \theta _{x} & {in}%
		~~(0,\pi )\times (0,\infty ), \\ 
		c\theta _{t}=-q_{x}-\beta \phi _{xt} & {in}~~(0,\pi )\times(0,\infty ), \\ 
		q=-\displaystyle\int_{-\infty }^{0}g(t-s)\theta (x,s)ds & {in}~~(0,\pi )\times(0,\infty ).%
	\end{array}%
	\right.  \label{A1}
\end{equation}%
This system models a one-dimensional porous elastic material of length $\pi $%
, subjected to thermal effects modelled by the Gurtin-Pipkin law of thermal
conductivity. Here, $u$ is the transversal displacement, $\phi $ is the volume
fraction, $\theta $ is the difference of temperature from an equilibrium
reference value and $q$ is the heat flux. The coefficients $\rho ,J,c,\mu
,b,\alpha ,\xi $ are positive constitutive constants. Moreover, to ensure that the energy associated to the solution of \eqref{A1} is a positive definite form, we assume that $b,\mu,\xi$ satisfy $\mu \xi
>b^{2}$. The coefficient $\beta $ is a coupling constant that is different
from zero but its sign does not matter in the analysis.

 The basic evolution equations in the theory of thermoelastic materials with voids are developed by Ie\c{s}an \cite{Iesan86,Iesan}. In the one dimensional case these equations are written as follows 
\begin{equation}\label{aa}
	\rho u_{tt}=T_x, ~~ J\varphi_{tt}=H_x-G,~~ \rho\eta_{t}=q_x,
\end{equation}
where, $T$ is the stress tensor, $H$ is the equilibrated stress vector, $G$ is the equilibrated body force and $\eta$ is the entropy.

In the linear theory, and assuming that the body is free from stresses and has zero intrinsic equilibrated body force, the constitutive equations are \cite{Iesan86,Iesan}
\begin{equation}\label{bb}
	\begin{array}{ll}
		T=\mu u_{x}+b\varphi,&
		H=\alpha\varphi_{x}-\beta\theta,\\
		G=bu_{x}+\xi\varphi,&
		\rho\eta=-c\theta-\beta\varphi_{x}.
	\end{array}
\end{equation}
The substitution of the constitutive equation \eqref{bb} in the evolution equations \eqref{aa} leads to the first three equation of the system \eqref{A1}. The forth equation of \eqref{A1} represents the heat conduction modelled by Gurtin-Pipkin thermal law \eqref{1b} which will be introduced later. 

Much has been written on the study of the longtime behavior of porous
thermoelastic systems. Quintanilla \cite{Quin1} proved the slow decay of the
solution of the system 
\begin{equation}
	\left\{ 
	\begin{array}{l}\label{A}
		\rho u_{tt}=\mu u_{xx}+\beta \varphi _{x}, \\ 
		\rho \kappa \varphi _{tt}=\alpha \varphi _{xx}-\beta u_{x}-\xi \varphi -\tau
		\varphi _{t},%
	\end{array}%
	\right.  
\end{equation}%
subjected to the Dirichlet-Neumann or the Neumann-Dirichlet boundary conditions.\\
Later in 2017, Apalara \cite{Apalara} and, independently, Santos $\emph{et}$ \emph{al.} \cite{Santos1} established  the exponential stability of (\ref{A})
provided that the wave speeds of the equations of (\ref{1a}) are equal%
\begin{equation}
	\frac{\mu }{\rho }=\frac{\alpha }{\rho \kappa }.  \label{a1}
\end{equation}
The same result was obtained by Santos \emph{et al.} \cite{Santos1}
when the damping $-\tau \varphi _{t}$ is replaced by the elastic damping $\gamma u_{t}$ in the first equation.

During the last three decades, several dissipative mechanisms have been examined to exponentially stabilize the solution of (\ref{A}). First, Maga\~na and Quintanilla replace the visco-porous damping $-\tau\varphi_{t}$ by a visco-elastic one of the form $\lambda u_{txx}$ in the first equation of \eqref{A} and proved the non-exponential decay of the solution. Casas and Quintanilla \cite{Casas1}, coupled the system \eqref{A} with the heat equation. They examined the system
\begin{equation}\label{AB}
	\left\{\begin{array}{l}
		\rho u_{tt}=\mu u_{xx}+b\varphi_x-\beta\theta_x,\\
		J\varphi_{tt}=\alpha\varphi_{xx}-bu_x-\xi\varphi+m\theta-\tau\varphi_{t},\\
		c\theta_{t}=k^{\star}\theta_{xx}-\beta u_{xt}-m\varphi_{t},
	\end{array}\right.
\end{equation}
with the boundary conditions
\[u(0,t)=u(\pi,t)=\varphi_{x}(0,t)=\varphi_{x}(\pi,t)=\theta_{x}(0,t)=\theta_{x}(\pi,t)=0,\]
and proved that the combination of the visco-porous and thermal dissipations lead to an exponential rate of decay. The same rate of decay was obtained when micro-temperature and viscoelastic dissipations are combined \cite{Magana}. However, removing the visco-porous damping $-\tau\varphi_{t}$ from the system \eqref{AB}, or replacing it by the viscoelastic damping $\lambda u_{txx}$ in the first equation of \eqref{AB}, lead to a lose of the exponential decay \cite{Casas2,Magana}.

Apalara \cite{Apalara1} and Feng \cite{Feng1} examined, respectively, the effect of the dissipation introduced by a memory term or a past history. They obtained a general rate of decay depending on the rate of the relaxation function.

Note that Magaña and Quintanilla \cite{Magana} are summarized the effects of the above dissipative mechanisms as follows:\\
if a thermal or a viscoelastic damping is combined with a micro-thermal or a visco-porous damping, the solution decays exponentially, otherwise, the solution decays in a slow way.

The rate of decay of Timoshenko type systems is quite similar to that of porous thermoelastic systems.  We cite here some of them. In a pioneer work, Soufyane \cite{Soufyane} considered the Timoshenko system%
\begin{equation}
	\left\{ 
	\begin{array}{l}
		\rho _{1}\varphi _{tt}=\kappa (\varphi _{x}+\psi )_{x}, \\ 
		\rho _{2}\psi _{tt}=\alpha \varphi _{xx}-\kappa (\varphi _{x}+\psi
		)-b(x)\psi _{t},%
	\end{array}%
	\right.  \label{B}
\end{equation}%
where $b\in L^{\infty }(0,L)$ is a strictly positive function, and showed
that the system is exponentially stable if and only if 
\begin{equation}
	\frac{\kappa }{\rho _{1}}-\frac{\alpha }{\rho _{2}}=0.  \label{C}
\end{equation}

The strong damping $b(x)\psi_{t}$ was replaced by the memory term $\int_{0}^{t}g(t-s)\psi_{xx}(x,s)ds$ in \cite{Ammar,Mustafa,Tatar} and an
exponential stability result was obtained for an exponentially decaying
relaxation function $g$.

Rivera and Racke \cite{Racke} examined the heat dissipation produced by the coupling with the heat equation. They considered the system 
\begin{equation}
	\left\{ 
	\begin{array}{l}
		\rho _{1}\varphi _{tt}=\kappa (\varphi _{x}+\psi )_{x}, \\ 
		\rho _{2}\psi _{tt}=b\varphi _{xx}-\kappa (\varphi _{x}+\psi )-\gamma \theta
		_{x}, \\ 
		\rho _{3}\theta _{t}=k\theta _{xx}-\gamma \psi _{tx},%
	\end{array}%
	\right.  \label{D}
\end{equation}%
with different boundary conditions, and proved that the solution decays exponentially, if and
only if (\ref{C}) holds. The same result was obtained by Almeida Junior 
\emph{et al.} \cite{Almeida} for the following Timoshenko type system with a
thermoelastic coupling in shear force 
\begin{equation}
	\left\{ 
	\begin{array}{ll}
		\rho _{1}\varphi _{tt}-\kappa \left( \varphi _{x}+\psi \right) _{x}+\sigma
		\theta _{x}=0, & in\;\left] 0,L\right[ \times \mathbb{R}_{+}, \\ 
		\rho _{2}\psi _{tt}-b\psi _{xx}+\kappa \left( \varphi _{x}+\psi \right)
		-\sigma \theta =0, & in\;\left] 0,L\right[ \times \mathbb{R}_{+}, \\ 
		\rho _{3}\theta _{t}-\gamma \theta _{xx}+\sigma \left( \varphi _{x}+\psi
		\right) _{t}=0, & in\;\left] 0,L\right[ \times \mathbb{R}_{+}.%
	\end{array}%
	\right.  \label{E}
\end{equation}

We can summarize the rate of decay of Timosenko type systems is as follows: \begin{itemize}
	\item [$\cdot$]
	Two dissipative mechanisms lead, always, to an exponential stability result regardless any assumption on the coefficients of the system \cite{Kim,Raposo,Shi}.
	\item[$\cdot$] A single dissipation mechanism  produce an exponential rate of decay, if and only if the wave speeds of the hyperbolic part of the system are equal.
\end{itemize}
Note that the heat conduction in (\ref{AB}) and (\ref{E}) is given through 
Fourier's law 
\begin{equation}
	q=-\kappa \theta _{x},  \label{a2}
\end{equation}%
which leads to a parabolic equation and consequently, the heat propagates
with an infinite speed, which is unrealistic.  To overcome this physical paradox, many alternative
theories were developed. Lord and Shulman \cite{Lord} replaced the Fourier
law \eqref{a2} by the Cattaneo one 
\begin{equation}
	\tau _{0}q_{t}+q+\kappa \theta _{x}=0,  \label{d}
\end{equation}%
where the positive constant $\tau _{0}$ represents the time lag in the
response of the heat flux to the temperature gradient. According to this
theory, (called second sound thermoelasticity), the system becomes fully
hyperbolic and the heat propagates as a wave with a finite speed. Moreover, the speed of the heat equation is involved in the exponential stability condition. 

In the context of this theory, Messaoudi and Fareh \cite{Fareh5} considered
the porous thermoelastic system of second sound 
\begin{equation}
	\left\{ 
	\begin{array}{l}
		\rho u_{tt}=\mu u_{xx}+b\phi _{x}, \\ 
		J\phi _{tt}=\alpha \phi _{xx}-bu_{x}-\xi \phi -\beta \theta _{x}, \\ 
		c\theta _{t}=-q_{x}-\beta \phi _{tx}-\delta \theta , \\ 
		\tau _{0}q_{t}+q+\kappa \theta _{x}=0,%
	\end{array}%
	\right.  \label{b}
\end{equation}%
where $\rho ,\mu ,J,\alpha ,\xi ,\beta ,c,\kappa $ and $\tau _{0}$ are
positive constants and $\mu \xi =b^{2}$. They introduced the stability
number 
\begin{equation*}
	\chi =\beta ^{2}-\left( \frac{c\alpha \mu }{\rho }-\frac{\alpha \kappa }{%
		\tau _{0}}\right) \left( \frac{J}{\alpha }-\frac{\rho }{\mu }\right)
\end{equation*}%
and proved that the solution of (\ref{b}) is exponentially stable if and
only if $\chi =0$.

To the best of our knowledge \cite{Fareh5} is the only contribution that introduces
the stability number for porous thermoelastic problems. However, for
Timoshenko type systems, the stability number was introduced first by Santos 
\emph{et al.} \cite{Santos} for Timoshenko systems. They considered the following system with second sound thermoelasticity 
\begin{equation}
	\left\{ 
	\begin{array}{ll}
		\rho _{1}\varphi _{tt}-\kappa (\varphi _{x}+\psi )_{x}=0, & ~\mbox{in}%
		~(0,l)\times \mathbb{R}_{+}, \\ 
		\rho _{2}\psi _{tt}-b\psi _{xx}+\kappa (\varphi _{x}+\psi )+\delta \theta
		_{x}=0, & ~\mbox{in}~(0,l)\times \mathbb{R}_{+}, \\ 
		\rho _{3}\theta _{t}+q_{x}+\delta \psi _{tx}=0, & ~\mbox{in}~(0,l)\times 
		\mathbb{R}_{+}, \\ 
		\tau q_{t}+\beta q+\theta _{x}=0, & ~\mbox{in}~(0,l)\times \mathbb{R}_{+},%
	\end{array}%
	\right.  \label{b1}
\end{equation}%
and showed the exponential stability of the solution of (\ref{b1}), if and
only if 
\begin{equation}
	\chi _{0}=\left( \tau -\frac{\rho _{1}}{\rho _{3}\kappa }\right) \left( \rho
	_{2}-\frac{b\rho _{1}}{\kappa }\right) -\frac{\tau \rho _{1}\delta ^{2}}{%
		\rho _{3}\kappa }=0.
\end{equation}%
In fact, the stability number $\chi $ defined in (\ref{C}) is recovered from 
$\chi _{0}$ at the limit $\tau \rightarrow 0,$ when (\ref{b1}) collapses
to (\ref{D}).

It is worth noting that the theory of thermoelasticity with second sound is incapable to depict the memory effect which prevails in some materials, particularly at low temperature. This fact leads to the search for a more general constitutive hypothesis that links the heat flux to the thermal memory. Gurtin and Pipkin \cite{Gurtin} supposed that the heat flux depends on the accumulated  history of the temperature gradient weighted by a relaxation
function called the heat flux kernel. They formulated a general nonlinear
theory for which thermal perturbations disseminate with finite speed. According to this theory, the linearized constitutive equation for $q$ writes  
\begin{equation}
	q=-\int_{-\infty}^{t}g\left( t-s\right) \theta_{x}\left( x,s\right) ds,
	\label{1b}
\end{equation}
where $g(s)$ is the heat conductivity relaxation kernel. The presence of
convolution term (\ref{1b}) in the heat equation renders the system into a fully hyperbolic system, which allows the heat to propagate
with finite speed and accepts to describe the memory effect of heat
conduction. We notice that several constitutive models arise from
different choices of $g\left( s\right) .$ In particular, if we take $%
g\left( s\right) =\kappa\delta\left( s\right) ,$ where $\delta$ is the Dirac
mass weighted at $0,$ then (\ref{1b}) becomes Fourier's law (\ref{a2}%
), and if we let 
\begin{equation*}
	g\left( s\right) =\frac{\kappa}{\tau_{0}}e^{-\frac{s}{\tau_{0}}},\;\tau>0,
\end{equation*}
we recover Cattaneo's law (\ref{d}). Therefore, (\ref{1b}) is a generalized
form of Fourier's and Cattaneo's laws.

In the context of Gurtin-Pipkin theory, Pata and Vuk \cite{Pata} considered the
linear thermoelastic system 
\begin{equation*}
	\left\{ 
	\begin{array}{l}
		u_{tt}(x,t)=u_{xx}(x,t)-\theta _{x}(x,t), \\ 
		\theta _{t}\left( x,t\right) =-u_{tx}\left( x,t\right) -q_{x}\left(
		x,t\right) ,%
	\end{array}%
	\right.
\end{equation*}%
where the heat flux $q$ is modeled by (\ref{1b}). They used the semigroup
theory and established an exponential stability result under some assumptions on $\mu \left( s\right)
=-g^{\prime }\left( s\right) $. Fatori and Mu\~{n}oz Rivera \cite{Fatouri} studied the
system 
\begin{equation*}
	\left\{ 
	\begin{array}{l}
		u_{tt}-au_{xx}+\alpha \theta _{x}=0\;in\;(0;L)\times 
		\mathbb{R}
		_{+} \\ 
		\theta _{t}-k\ast \theta _{xx}+\alpha u_{xt}=0\;in\;(0;L)\times 
		\mathbb{R}
		_{+},%
	\end{array}%
	\right.
\end{equation*}%
where 
\begin{equation*}
	\left( k\ast \theta _{xx}\right) \left( t\right) =\int_{0}^{t}k\left( t-\tau
	\right) \theta _{xx}\left( \tau \right) d\tau.
\end{equation*}%
They obtained an exponential decay result provided that the kernel $k$ is
positive definite and decays exponentially.

Concerning Timoshenko type systems with heat flux given by Gurtin-Pipkin law, Dell'Oro an Pata \cite{Oro} analyzed the following
system 
\begin{equation}
	\left\{ 
	\begin{array}{l}
		\rho_{1}\varphi_{tt}-\kappa\left( \varphi_{x}+\psi\right) _{x}=0, \\ 
		\rho_{2}\psi_{tt}-b\psi_{xx}+\kappa\left( \varphi_{x}+\psi\right)
		+\delta\theta_{x}=0, \\ 
		\rho_{3}\theta_{t}-\displaystyle\frac{1}{\beta}\int_{0}^{+\infty}g\left(
		s\right) \theta_{xx}\left( t-s\right) ds+\delta\psi_{tx}=0.%
	\end{array}
	\right.  \label{c}
\end{equation}
They introduced the stability constant 
\begin{equation*}
	\chi_{g}=\left[ \frac{\rho_{1}}{\rho_{3}\kappa}-\frac{\beta}{g\left(
		0\right) }\right] \left[ \frac{\rho_{1}}{\kappa}-\frac{\rho_{2}}{b}\right] - 
	\frac{\beta}{g\left( 0\right) }\frac{\rho_{1}\delta^{2}}{\rho_{3}\kappa b}
\end{equation*}
and proved that the solution of the system is exponentially stable if and only if $\chi_{g}=0.$

Recently, Fareh \cite{FarehRV} considered the porous thermoelastic system\begin{equation}\label{FA}
	\left\{ 
	\begin{array}{l}
		\rho u_{tt}=\mu u_{xx}+b\varphi_{x}-\beta\theta_{x}, \\ 
		J\varphi_{tt}=\alpha\varphi_{xx}-bu_{x}-\xi\varphi+m\theta-\tau\varphi_{t}, \\ 
		c\theta_{t}=-\displaystyle\int_{-\infty}^{0}k\left(t-
		s\right) \theta_{xx}\left( s\right) ds-\beta u_{tx}-m\varphi_{t},%
	\end{array}
	\right. 
\end{equation}
where the heat conduction is modeled by Gurtin-Pipkin thermal law (\ref{1b}). He proved the exponential stability of the solution of (\ref{FA}) without any restriction on the coefficients of the system.

The system (\ref{A1}) considered in the present paper is a (weak) version of (\ref{FA}), where the coupling porosity-temperature is direct, however, the coupling elasticity-temperature is through the porosity equation. Moreover, system (\ref{A1}) contain only one dissipative mechanism produced by the heat equation of hyperbolic type. 

Here, we introduce a stability number $\chi _{g}$, for which
the solution of (\ref{A1}) is exponentially stable if and only if $\chi
_{g}=0$. 

The system (\ref{A1}) is supplemented by the following initial and boundary
conditions 
\begin{equation}
	\left\{ 
	\begin{array}{c}
		u(x,0)=u_{0}(x),~\phi (x,0)=\phi _{0}(x),~\theta (x,0)=\theta _{0}(x), \\ 
		u_{t}(x,0)=u_{1}(x),~\phi _{t}(x,0)=\phi _{1}(x),%
	\end{array}
	\right.  \label{2}
\end{equation}
\begin{equation}
	u(0,t)=u(\pi ,t)=\phi _{x}(0,t)=\phi _{x}(\pi ,t)=\theta (0,t)=\theta (\pi
	,t)=0.  \label{3}
\end{equation}

To write the problem in the semigroup setting, we introduce the new variables%
\begin{equation*}
	\theta^{t}(x,s):=\theta(x,t-s),\;s\geq0,
\end{equation*}
and 
\begin{equation*}
	\eta(x,s)=\eta^{t}(x,s):=\int_{0}^{s}\theta^{t}(x,\tau)d\tau,\;s\geq0,
\end{equation*}
which denote the past history and the integrated past history of $\theta$ up to $%
t$, respectively.

Clearly $\eta ^{t}\left( x,s\right) $ satisfies the boundary conditions%
\begin{equation*}
	\eta (0,s)=\eta (\pi ,s)=0.
\end{equation*}%
Moreover, we assume that $g\left( \infty \right) =0$ and $\eta (x,0)=%
\underset{s\longrightarrow 0}{\lim }\eta ^{t}(x,s)=0,$ then%
\begin{equation*}
	q=-\int_{-\infty }^{t}g\left( t-s\right) \theta _{x}\left( x,s\right)
	ds=\int_{0}^{+\infty }g^{\prime }\left( s\right) \eta _{x}^{t}\left(
	x,s\right) ds.
\end{equation*}%
Further, we have 
\begin{equation}
	\eta _{t}(x,s)=\theta -\eta _{s}(x,s).  \label{1a}
\end{equation}%
Setting $\kappa \left( s\right) =-g^{\prime }\left( s\right) ,$ system (\ref%
{A1}) becomes 
\begin{equation}
	\left\{ 
	\begin{array}{ll}
		\rho u_{tt}=\mu u_{xx}+b\phi _{x} & {in}~~(0,\pi )\times (0,\infty ), \\ 
		J\phi _{tt}=\alpha \phi _{xx}-bu_{x}-\xi \phi -\beta \theta _{x} & {in}%
		~~(0,\pi )\times (0,\infty ), \\ 
		c\theta _{t}=-\beta \phi _{xt}+\displaystyle\int_{0}^{+\infty }\kappa
		(s)\eta _{xx}^{t}(x,s)ds & {in}~~(0,\pi )\times (0,\infty ), \\ 
		\eta _{t}^{t}=\theta -\eta _{s}^{t} & {in}~~(0,\pi )\times (0,\infty ),%
	\end{array}%
	\right.  \label{1}
\end{equation}

From (\ref{1})$_{2}$ and the boundary conditions (\ref{3}), we have

\begin{equation*}
	J\frac{d^{2}}{dt^{2}}\int_{0}^{\pi }\phi \left( x,t\right) dx+\xi
	\int_{0}^{\pi }\phi \left( x,t\right) dx=0,
\end{equation*}

which gives

\begin{equation*}
	\int_{0}^{\pi }\phi \left( x,t\right) dx=\left( \int_{0}^{\pi }\phi
	_{0}\left( x\right) dx\right) \cos \left( \sqrt{\frac{\xi }{J}}t\right) +%
	\sqrt{\frac{\xi }{J}}\left( \int_{0}^{\pi }\phi _{1}\left( x\right)
	dx\right) \sin \left( \sqrt{\frac{\xi }{J}}t\right) .
\end{equation*}

Consequently, if we set

\begin{align*}
	\overline{\phi }\left( x,t\right) =&\phi \left( x,t\right) -\left(
	\int_{0}^{\pi }\phi _{0}\left( x\right) dx\right) \cos \left( \sqrt{\frac{%
			\xi }{J}}t\right) \\
		&-\sqrt{\frac{\xi }{J}}\left( \int_{0}^{\pi }\phi
	_{1}\left( x\right) dx\right) \sin \left( \sqrt{\frac{\xi }{J}}t\right) ,
\end{align*}

we find

\begin{equation*}
	\int_{0}^{\pi }\overline{\phi }\left( x,t\right) dx=0.
\end{equation*}
Moreover, we check easily that $\left( u,\overline{\phi },\theta ,\eta
\right) $ solves (\ref{1}) subjected to the boundary conditions (\ref{3}),
initial data 
\begin{equation}
	\left\{ 
	\begin{array}{l}
		u(x,0)=u_{0}(x),~\overline{\phi} (x,0)=\phi _{0}(x)-\displaystyle%
		\int_{0}^{\pi}\phi(x)dx,~\theta (x,0)=\theta _{0}(x), \\ 
		u_{t}(x,0)=u_{1}(x),~\overline{\phi} _{t}(x,0)=\phi _{1}(x)-\displaystyle%
		\sqrt{\frac{\xi }{J}} \int_{0}^{\pi }\phi _{1}\left( x\right) dx,~\eta
		^{0}(x,s)=\eta _{0}(x,s),%
	\end{array}
	\right.  \label{A2}
\end{equation}
and more importantly, Poincar\'{e}'s inequality can be applied for $%
\overline{\phi }$. In the sequel we work with $\left( u,\overline{\phi}
,\theta ,\eta \right) $ but we write $\left( u,\phi ,\theta ,\eta \right) $
for convenience.

Regarding the memory kernel, we assume the following set of hypotheses:

\begin{itemize}
	\item[(h1)] $\kappa \in C\left( R^{+}\right) \cap L^{1}\left( R^{+}\right) .$
	
	\item[(h2)] $\kappa \left( s\right) >0,\kappa ^{^{\prime }}\left( s\right)
	\leq 0,\forall s\geq 0.$
	
	\item[(h3)] $\int_{0}^{+\infty }\kappa (s)ds=g\left( 0\right),$
	
	\item[(h4)] there exists $\delta >0$ such that $\kappa ^{\prime }\left(
	s\right) \leq -\delta \kappa \left( s\right) ,\forall s\geq 0.$
\end{itemize}

To the best of our knowledge, the only contribution that examined the behavior of
porous thermoelastic systems with Gurtin-Pipkin law was done by Fareh \cite{FarehRV}. Here, we improves the results of Fareh \cite{FarehRV} in the sense that we consider the direct coupling only between porosity and temperature and omit the strong dissipation given by $-\tau\phi_t$. Moreover, our result improves that of \cite{Oro} obtained for Timoshenko systems and generalizes the results of 
\cite{Fareh5} and \cite{MunozQuintanilla} since (\ref{1b}) subsumes (\ref{a2}
) and (\ref{d}).

The rest of the paper is organized as follow: in Section 2,
we introduce some functional preliminaries. Section 3 is devoted to the
prove of the existence of a unique solution to (\ref{1}), (\ref{3}), (\ref%
{A2}). In Section 4, we state and prove our stability result provided that $%
\chi _{g}=0$. Finally, in Section 5, we prove the lack of the exponential
decay whenever $\gamma _{g}=0$ or $\chi _{g}\neq 0.$

\section{Preliminaries}

First, let $A:D(A)\subset L^{2}\left( 0,\pi\right)\rightarrow L^{2}\left(
0,\pi\right)$ be the operator defined by $Au=-D^{2}u$. For Dirichlet
boundary conditions $D\left( A\right) =H^{2}\cap H_{0}^{1}$ and the operator 
$A$ is a self-adjoint and positive operator. Therefore, it is possible to define
the powers $A^{r}$ of $A$ for $r\in\mathbb{R}$ and the Hilbert space $%
V_{r}=D\left( A^{r/2}\right) $ equipped with the inner product%
\begin{equation*}
	\left\langle u,v\right\rangle _{r}=\left\langle
	A^{r/2}u,A^{r/2}v\right\rangle
\end{equation*}
and let $\left\Vert u\right\Vert _{r}$ be the associated norm. In
particular, $V_{0}=L^{2},$ $V_{-1}=H^{-1},$ $V_{1}=H_{0}^{1}$ and%
\begin{equation*}
	\left\langle A^{1/2}u,A^{1/2}v\right\rangle =\left\langle Du,Dv\right\rangle
	,\;\forall u,v\in H_{0}^{1}.
\end{equation*}
For $r_{1}>r_{2}$ the injection $V_{r_{1}}\hookrightarrow V_{r_{2}}$ is
continuous.

Next, we introduce the weighted Hilbert space%
\begin{equation*}
	\mathcal{V}=L_{\kappa }^{2}((0,+\infty );H_{0}^{1}(0,\pi )),
\end{equation*}%
with inner product 
\begin{equation*}
	\langle \eta ,\zeta \rangle _{\mathcal{V}}=\int_{0}^{+\infty }\kappa
	(s)\langle \eta _{x}(s),\zeta _{x}(s)\rangle ds,
\end{equation*}%
and the norm 
\begin{equation*}
	\Vert \eta \Vert _{\mathcal{V}}^{2}=\int_{0}^{+\infty }\kappa (s)\Vert \eta
	_{x}(s)\Vert ^{2}ds.
\end{equation*}%
At this point, we introduce the energy functional associated to the solution $\left(
u,\phi ,\theta ,\eta \right) $ of (\ref{1}), (\ref{3}), (\ref{A2}), by%
\begin{eqnarray}
	E\left( t\right) &:&=\frac{1}{2}\int_{0}^{\pi }\left[ \rho u_{t}^{2}+J\phi
	_{t}^{2}+\mu u_{x}^{2}+\xi \phi ^{2}+2bu_{x}\phi +\alpha \phi
	_{x}^{2}+c\theta ^{2}\right] dx  \label{e} \\
	&&+\int_{0}^{+\infty }\kappa (s)\int_{0}^{\pi }\eta _{x}^{2}(s)dxds.  \notag
\end{eqnarray}%
The energy space is then 
\begin{equation*}
	\mathcal{H}:=H_{0}^{1}\left( 0,\pi \right) \times L^{2}\left( 0,\pi \right)
	\times H_{\ast }^{1}\left( 0,\pi \right) \times L^{2}\left( 0,\pi \right)
	\times L^{2}\left( 0,\pi \right) \times \mathcal{V}.
\end{equation*}%
It is a Hilbert space with respect to the inner product 
\begin{equation*}
	\begin{array}{cc}
		\langle U,U^{\ast }\rangle = & \rho \displaystyle\int_{0}^{\pi }vv^{\ast
		}dx+\mu \int_{0}^{\pi }u_{x}u_{x}^{\ast }dx+\xi \int_{0}^{\pi }\phi \phi
		^{\ast }dx+\alpha \int_{0}^{\pi }\phi _{x}\phi _{x}^{\ast }dx \\ 
		& +J\displaystyle\int_{0}^{\pi }\psi \psi ^{\ast }dx+b\int_{0}^{\pi }\phi
		u_{x}^{\ast }dx+b\int_{0}^{\pi }u_{x}\phi ^{\ast }dx \\ 
		& +c\displaystyle\int_{0}^{\pi }\theta \theta ^{\ast }dx+\int_{0}^{+\infty
		}\kappa (s)\int_{0}^{\pi }\eta _{x}(s)\eta _{x}^{\ast }(s)dxds,%
	\end{array}%
\end{equation*}%
where, $U=(u,v,\phi ,\psi ,\eta )^{T}$ and $U^{\ast }=(u^{\ast },v^{\ast
},\phi ^{\ast },\psi ^{\ast },\eta ^{\ast })^{T}$. The associated norm is 
\begin{equation}
	\Vert U\Vert _{\mathcal{H}}^{2}=\mu \Vert u_{x}\Vert ^{2}+\xi \Vert \phi
	\Vert ^{2}+2b\langle \phi ,u_{x}\rangle +\rho \Vert v\Vert ^{2}+\alpha \Vert
	\phi _{x}\Vert ^{2}+J\Vert \psi \Vert ^{2}+c\Vert \theta \Vert ^{2}+\Vert
	\eta \Vert _{\mathcal{V}}^{2}.  \label{5}
\end{equation}

\begin{remark}
	Under the hypothesis $\mu \xi >b^{2}$ we have 
	\begin{equation}
		\int_{0}^{\pi }\left( \mu u_{x}^{2}+\xi \phi ^{2}+2bu_{x}\phi \right)
		dx=\left\Vert \sqrt{\mu }u_{x}+\dfrac{b}{\sqrt{\mu }}\phi \right\Vert
		^{2}+\left( \xi -\dfrac{b^{2}}{\mu }\right) \Vert \phi \Vert ^{2}.  \label{6}
	\end{equation}
\end{remark}

\begin{lemma}
	The energy $E(t)$ defined by (\ref{e}), satisfies along the solution $%
	(u,\phi ,\theta ,\eta )$ the estimate 
	\begin{equation}
		\frac{d}{dt}E(t)=\langle T\eta ,\eta \rangle .  \label{e1}
	\end{equation}
\end{lemma}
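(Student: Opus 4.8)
The plan is to differentiate $E(t)$ in time and substitute the four equations of (\ref{1}), integrating by parts in $x$ and invoking the boundary conditions (\ref{3}) so that every boundary contribution vanishes: $u_t$ and $\theta$ vanish at $x=0,\pi$, and $\phi_x$ vanishes there as well. I would first treat the mechanical–elastic block $\rho u_t^2+J\phi_t^2+\mu u_x^2+\xi\phi^2+2bu_x\phi+\alpha\phi_x^2$. Replacing $\rho u_{tt}$ and $J\phi_{tt}$ by the right-hand sides of (\ref{1})$_1$ and (\ref{1})$_2$ and integrating by parts, I expect the conservative terms to cancel in pairs: the two $\mu$-contributions, the two $\alpha$-contributions, the two $b$-coupling contributions and the two $\xi$-contributions annihilate each other exactly, leaving only the thermal coupling $-\beta\int_0^\pi \phi_t\theta_x\,dx$ surviving from the porosity equation.

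Next I would differentiate the thermal term $\tfrac{1}{2}c\int_0^\pi \theta^2\,dx$ and insert (\ref{1})$_3$. The resulting term $-\beta\int_0^\pi \theta\phi_{xt}\,dx$, after one integration by parts in $x$, becomes $+\beta\int_0^\pi \theta_x\phi_t\,dx$ and cancels precisely the surviving $-\beta\int_0^\pi\phi_t\theta_x\,dx$ from the previous step; this is the crucial cancellation that eliminates the velocity–temperature coupling. What remains from the heat equation is the genuinely memory-driven term $\int_0^\pi\theta\int_0^{+\infty}\kappa(s)\eta^{t}_{xx}(s)\,ds\,dx$, which, after integration by parts in $x$, equals $-\int_0^{+\infty}\kappa(s)\langle\theta_x,\eta^{t}_x(s)\rangle\,ds$.

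It then remains to differentiate the history term $\int_0^{+\infty}\kappa(s)\|\eta_x(s)\|^2\,ds$, understood with the $\tfrac{1}{2}$ weighting consistent with the energy norm (\ref{5}), and to use the transport relation (\ref{1a}), $\eta_t=\theta-\eta_s$. This yields two pieces: a term $\int_0^{+\infty}\kappa(s)\langle\eta_x(s),\theta_x\rangle\,ds$ that cancels the surviving memory–heat contribution from the previous paragraph, and a term $-\int_0^{+\infty}\kappa(s)\langle\eta_x(s),\eta_{xs}(s)\rangle\,ds$. For the latter I would write $\langle\eta_x,\eta_{xs}\rangle=\tfrac{1}{2}\partial_s\|\eta_x(s)\|^2$ and integrate by parts in $s$; using $\eta(x,0)=0$ and the decay of $\kappa$, the boundary terms drop and I obtain $\tfrac{1}{2}\int_0^{+\infty}\kappa'(s)\|\eta_x(s)\|^2\,ds$. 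This is exactly $\langle T\eta,\eta\rangle$ for the shift generator $T\eta=-\eta_s$, which is the asserted identity (\ref{e1}).

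The main obstacle is rigor rather than algebra: the computation above is formal unless one works with strong solutions, so I would first establish the identity for data in the domain of the operator generating the solution semigroup and then pass to the general case by density. The delicate point is the integration by parts in the history variable $s$, where one must justify that $\kappa(s)\|\eta_x(s)\|^2\to 0$ as $s\to+\infty$ and that the boundary term at $s=0$ vanishes; this rests on the membership of $\eta$ in the domain of $T$ together with hypotheses (h1)--(h2), which guarantee $\kappa'\le 0$ and integrability. The same hypotheses make $\langle T\eta,\eta\rangle\le 0$, so that (\ref{e1}) simultaneously exhibits $E$ as non-increasing along the flow.
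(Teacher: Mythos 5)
Your proof is correct and follows essentially the same route as the paper: multiply the equations of (\ref{1}) by $u_t,\phi_t,\theta$ and $\eta$ in the appropriate inner products, integrate by parts using (\ref{3}), and let the conservative and coupling terms cancel so that only $\langle T\eta,\eta\rangle_{\mathcal V}$ survives. You correctly flag the $\tfrac12$ normalization of the history term needed for consistency with (\ref{5}), and your justification of the integration by parts in $s$ is the content the paper defers to Lemma \ref{L1}.
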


\begin{proof}
	Taking the $L^{2}-$inner product of the first three equations of (\ref{1})
	by $u_{t},\phi _{t},\theta $ respectively  and the $\mathcal{V-}$inner
	product of the forth equation by $\eta _{t},$ then adding the obtained
	equations and using integration by parts, (\ref{e1}) follows immediately. 
\end{proof}

\section{Well posedness}

In this section we prove that the problem given by (\ref{1}),(\ref{3}) and (%
\ref{A2}) has a unique solution. The prove is based on the theory of
semigroups and Hille-Yoside Theorem.\newline
To rewrite the problem in the semigroup setting, we introduce two new
independent variables $u_{t}=v$ and $\phi _{t}=\psi $, then the system %
\eqref{1} and the initial conditions \eqref{3} can written as follows 
\begin{equation}
	\left\{ 
	\begin{array}{c}
		U_{t} +\mathcal{A}U=0, \\ 
		U\left(0\right) =U_{0},%
	\end{array}%
	\right.  \label{7}
\end{equation}%
where $\mathcal{A}$ is the operator defined on $\mathcal{H}$ by 
\begin{equation}
	\mathcal{A}U=\left( 
	\begin{array}{c}
		-v \\ 
		-\dfrac{\mu}{\rho }u_{xx}-\dfrac{b}{\rho }\phi _{x} \\ 
		-\psi \\ 
		-\dfrac{\alpha }{J}\phi _{xx} +\dfrac{\xi }{J}\phi +\dfrac{b}{J}u_{x}+\dfrac{%
			\beta }{J}\theta \\ 
		\dfrac{\beta }{c}\psi -\dfrac{1}{c}\displaystyle\int_{0}^{+\infty }\kappa
		(s)\eta _{xx}\left( s\right) ds \\ 
		-\theta +\eta _{s}(s)%
	\end{array}%
	\right) ,  \label{8}
\end{equation}%
with domain 
\begin{equation*}
	D\left( \mathcal{A}\right) =\left\{ 
	\begin{array}{c}
		U\in \mathcal{H}:u,\phi \in H^{2}(0,\pi ),v,\theta \in H_{0}^{1}(0,\pi
		),\psi \in H_{\ast }^{1}(0,\pi ), \\ 
		\eta \in H_{\kappa }^{1}((0,+\infty );H_{0}^{1}), \\ 
		\displaystyle\int_{0}^{+\infty }\kappa (s)\eta _{xx}(s)ds\in
		L^{2}(0,\pi),\eta (0)=0.%
	\end{array}%
	\right\}
\end{equation*}

The well-posedness result reads as follow:

\begin{theorem}
	\label{TH1} Suppose that $\kappa$ satisfies the hypothesis (h1)-(h4), then
	for any $U_{0}=\left( u_{0},u_{1},\phi_{0},\phi_{1}\right. $,$\left.
	\theta_{0},\eta_{0}\right) ^{T}\in\mathcal{H}$ the problem (\ref{7}) has a
	unique solution $U\in C\left( \left( 0,+\infty\right) ;\mathcal{H}\right) $.
	Moreover, if $U_{0}\in D\left( \mathcal{A}\right) $, then the solution $U$
	satisfies 
	\begin{equation*}
		U\in C\left( \left( 0,+\infty\right) ;D\left( \mathcal{A}\right) \right)
		\cap C^{1}\left( \left( 0,+\infty\right) ;\mathcal{H}\right) .
	\end{equation*}
\end{theorem}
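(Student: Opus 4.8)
The plan is to invoke the Lumer--Phillips theorem by showing that $-\mathcal{A}$ generates a $C_{0}$-semigroup of contractions on $\mathcal{H}$; equivalently, that $\mathcal{A}$ is maximal monotone. This reduces to three verifications: that $D(\mathcal{A})$ is dense in $\mathcal{H}$, that $\mathcal{A}$ is monotone (so that $-\mathcal{A}$ is dissipative), and that $I+\mathcal{A}$ is surjective, i.e. $R(I+\mathcal{A})=\mathcal{H}$. Density is routine, since $D(\mathcal{A})$ contains the smooth functions that are compactly supported in each slot, so I would only sketch it and concentrate on the two substantive points.

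For monotonicity I would compute $\langle \mathcal{A}U,U\rangle$ directly from the definition \eqref{8} and the inner product on $\mathcal{H}$. Pairing each component of $\mathcal{A}U$ with the corresponding component of $U$ and integrating by parts in $x$ (using the boundary conditions built into $D(\mathcal{A})$) cancels the conservative elastic, porous and coupling contributions exactly as in the proof of Lemma~1, so that the only surviving term is the history part $\langle T\eta,\eta\rangle$ with $T\eta=-\eta_{s}$. Integrating by parts in $s$ and using $\eta(0)=0$ together with hypothesis (h2) gives $\langle T\eta,\eta\rangle=\tfrac12\int_{0}^{+\infty}\kappa'(s)\|\eta_{x}(s)\|^{2}\,ds\le 0$, whence $\mathrm{Re}\,\langle\mathcal{A}U,U\rangle=-\langle T\eta,\eta\rangle\ge 0$. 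This is the easy step and it is consistent with the energy identity \eqref{e1}.

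The main obstacle is surjectivity, i.e. solving $(I+\mathcal{A})U=F$ for an arbitrary $F=(f_{1},\dots,f_{6})^{T}\in\mathcal{H}$ with $U\in D(\mathcal{A})$. Writing out the six component equations, the first and third are algebraic and let me eliminate $v=u-f_{1}$ and $\psi=\phi-f_{3}$. The sixth becomes the transport ODE $\eta_{s}+\eta=\theta+f_{6}$ subject to $\eta(0)=0$, which I would solve explicitly as $\eta(s)=(1-e^{-s})\theta+\zeta(s)$, where $\zeta(s)=\int_{0}^{s}e^{-(s-\tau)}f_{6}(\tau)\,d\tau$ depends only on the data. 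Consequently the memory integral $\int_{0}^{+\infty}\kappa(s)\eta_{xx}(s)\,ds$ splits as $g_{0}\,\theta_{xx}$ plus a fixed term in $H^{-1}$, with $g_{0}=\int_{0}^{+\infty}\kappa(s)(1-e^{-s})\,ds>0$ by (h1)--(h3). Substituting back collapses the problem to a coupled elliptic system in $(u,\phi,\theta)$ on $H_{0}^{1}\times H_{\ast}^{1}\times H_{0}^{1}$, which I would recast in variational form and solve by the Lax--Milgram theorem. The coercivity of the associated bilinear form is exactly where $\mu\xi>b^{2}$ enters, through the identity \eqref{6} that renders the elastic-porous part positive definite, while the $\theta$-block is coercive because $g_{0}>0$; the symmetric coupling term $2\beta\langle\phi,\theta\rangle$ is absorbed by Young's inequality thanks to the zeroth-order terms $\rho\|u\|^{2}$, $(J+\xi)\|\phi\|^{2}$ and $c\|\theta\|^{2}$ produced by the resolvent.

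Once $(u,\phi,\theta)$ is obtained, elliptic regularity bootstraps $u,\phi,\theta\in H^{2}$, and I would recover $v,\psi$ from the algebraic relations and $\eta$ from its explicit formula, checking the membership requirements of $D(\mathcal{A})$: $v,\theta\in H_{0}^{1}$, $\psi\in H_{\ast}^{1}$, $\eta\in H_{\kappa}^{1}((0,+\infty);H_{0}^{1})$ with $\eta(0)=0$, and $\int_{0}^{+\infty}\kappa(s)\eta_{xx}(s)\,ds\in L^{2}$. The delicate checks are the membership $\eta\in\mathcal{V}$ and the integrability of the memory term; the first follows by bounding $\int\kappa(s)\|\eta_{x}(s)\|^{2}\,ds$ using $\kappa\in L^{1}$ and the convolution estimate on $\zeta$, for which (h4) gives the exponential control, while the second is read off directly from the (now solved) fifth equation, where $\int\kappa\,\eta_{xx}\,ds=c\theta+\beta\psi-cf_{5}\in L^{2}$. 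With density, monotonicity and maximality established, the Lumer--Phillips theorem furnishes the contraction semigroup and hence the unique mild solution $U\in C((0,+\infty);\mathcal{H})$; the regularity $U\in C((0,+\infty);D(\mathcal{A}))\cap C^{1}((0,+\infty);\mathcal{H})$ for $U_{0}\in D(\mathcal{A})$ is then the standard consequence of semigroup theory.
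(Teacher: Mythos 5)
Your proposal follows essentially the same route as the paper: maximal monotonicity of $\mathcal{A}$ via the sign of $\langle T\eta,\eta\rangle_{\mathcal{V}}$ for the history operator $T\eta=-\eta_{s}$, explicit integration of the transport equation for $\eta$ with $\eta(0)=0$, reduction of $(I+\mathcal{A})U=F$ to an elliptic system in $(u,\phi,\theta)$ solved by Lax--Milgram, and an elliptic regularity bootstrap back into $D(\mathcal{A})$. One detail should be corrected: in the variational form the temperature--porosity coupling appears antisymmetrically, as $-\beta\langle\theta,\phi^{\ast}\rangle+\beta\langle\phi,\theta^{\ast}\rangle$, so it cancels identically in $B(U,U)$; your plan to absorb a symmetric term $2\beta\langle\phi,\theta\rangle$ by Young's inequality would instead require a smallness restriction on $\beta$ relative to $c$ and $J+\xi$ that is not among the hypotheses.
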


The proof of Theorem \ref{TH1} is based on the Hille-Yosida Theorem.

\begin{theorem}[Hille-Yosida]
	\label{TH2}\cite{Brezis} Let $\mathcal{A}$ be a maximal monotone operator on
	a Hilbert space $\mathcal{H}$, then, for any $u_0\in D(\mathcal{A})$ there
	exists a unique function 
	\begin{equation*}
		u\in C^1((0,+\infty);\mathcal{H})\cap C((0,+\infty);D(\mathcal{A})),
	\end{equation*}%
	satisfying 
	\begin{equation*}
		\left\{ 
		\begin{array}{l}
			\dfrac{du}{dt}+\mathcal{A}u=0, \\ 
			u(0)=u_0.%
		\end{array}%
		\right.
	\end{equation*}
\end{theorem}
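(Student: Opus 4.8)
The plan is to prove the statement by the classical Yosida approximation scheme, which realizes the solution operator as a limit of exponentials of bounded operators. First I would record the two structural consequences of maximal monotonicity that drive everything: for every $\lambda>0$ the operator $I+\lambda\mathcal{A}$ is a bijection from $D(\mathcal{A})$ onto $\mathcal{H}$ whose inverse, the resolvent $J_\lambda:=(I+\lambda\mathcal{A})^{-1}$, is a contraction ($\|J_\lambda\|\le 1$); and the Yosida approximation $\mathcal{A}_\lambda:=\tfrac1\lambda(I-J_\lambda)=\mathcal{A}J_\lambda$ is a bounded, monotone, globally Lipschitz operator defined on all of $\mathcal{H}$ satisfying $\|\mathcal{A}_\lambda w\|\le\|\mathcal{A}w\|$ and $\mathcal{A}_\lambda w\to\mathcal{A}w$ for every $w\in D(\mathcal{A})$. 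Surjectivity of $I+\lambda\mathcal{A}$ for all $\lambda>0$ follows from the assumed surjectivity of $I+\mathcal{A}$ together with the contraction estimate and a Banach fixed point / continuity-in-$\lambda$ argument; the contraction bound itself is immediate from monotonicity, since $\langle(I+\lambda\mathcal{A})v,v\rangle\ge\|v\|^2$.

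Since $\mathcal{A}_\lambda$ is Lipschitz, the approximate Cauchy problem $\frac{d}{dt}u_\lambda+\mathcal{A}_\lambda u_\lambda=0$, $u_\lambda(0)=u_0$, has a unique global solution $u_\lambda(t)=e^{-t\mathcal{A}_\lambda}u_0\in C^1([0,\infty);\mathcal{H})$ by the Cauchy--Lipschitz theorem; monotonicity of $\mathcal{A}_\lambda$ makes $e^{-t\mathcal{A}_\lambda}$ a contraction semigroup, which yields the a priori bound $\|\mathcal{A}_\lambda u_\lambda(t)\|=\|e^{-t\mathcal{A}_\lambda}\mathcal{A}_\lambda u_0\|\le\|\mathcal{A}_\lambda u_0\|\le\|\mathcal{A}u_0\|$ for $u_0\in D(\mathcal{A})$. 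The decisive step is a uniform-in-time Cauchy estimate: differentiating $\|u_\lambda-u_\mu\|^2$, decomposing $u_\lambda-u_\mu=\lambda\mathcal{A}_\lambda u_\lambda+(J_\lambda u_\lambda-J_\mu u_\mu)-\mu\mathcal{A}_\mu u_\mu$, and discarding the nonnegative cross term $\langle\mathcal{A}_\lambda u_\lambda-\mathcal{A}_\mu u_\mu,\,J_\lambda u_\lambda-J_\mu u_\mu\rangle\ge0$ (which is exactly monotonicity of $\mathcal{A}$ applied to $J_\lambda u_\lambda,J_\mu u_\mu\in D(\mathcal{A})$), I would arrive at $\frac{d}{dt}\|u_\lambda-u_\mu\|^2\le 2(\lambda+\mu)\|\mathcal{A}u_0\|^2$, hence $\|u_\lambda(t)-u_\mu(t)\|\le\sqrt{2(\lambda+\mu)\,t}\,\|\mathcal{A}u_0\|$ on bounded intervals.

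This estimate shows $\{u_\lambda\}$ is uniformly Cauchy on compact time intervals, so it converges to some $u\in C([0,\infty);\mathcal{H})$. To identify $u$ as a solution I would pass to the limit in the integral form $u_\lambda(t)=u_0-\int_0^t\mathcal{A}_\lambda u_\lambda(s)\,ds$. Running the same Cauchy argument on $v_\lambda:=\mathcal{A}_\lambda u_\lambda$ (which solves the same linear ODE with data $\mathcal{A}_\lambda u_0\to\mathcal{A}u_0$) gives uniform convergence $\mathcal{A}_\lambda u_\lambda\to w$, whence $u(t)=u_0-\int_0^t w(s)\,ds$, so $u\in C^1$ with $u'=-w$. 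Finally, since $\|J_\lambda u_\lambda-u_\lambda\|=\lambda\|\mathcal{A}_\lambda u_\lambda\|\le\lambda\|\mathcal{A}u_0\|\to0$, we have $J_\lambda u_\lambda(t)\to u(t)$ while $\mathcal{A}(J_\lambda u_\lambda(t))=\mathcal{A}_\lambda u_\lambda(t)\to w(t)$; the closedness of the maximal monotone operator $\mathcal{A}$ (a consequence of $I+\mathcal{A}$ having the bounded, everywhere-defined inverse $J_1$) then forces $u(t)\in D(\mathcal{A})$ and $\mathcal{A}u(t)=w(t)$, so that $u'+\mathcal{A}u=0$, and continuity of $w=\mathcal{A}u$ yields $u\in C([0,\infty);D(\mathcal{A}))$.

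Uniqueness is the easy end: for two solutions $u,\tilde u$, monotonicity gives $\tfrac12\frac{d}{dt}\|u-\tilde u\|^2=-\langle\mathcal{A}u-\mathcal{A}\tilde u,u-\tilde u\rangle\le0$, so equal initial data force $u\equiv\tilde u$. I expect the main obstacle to be the convergence step for the derivatives, i.e.\ establishing that $\mathcal{A}_\lambda u_\lambda$ (not merely $u_\lambda$) converges: this is where the a priori bound $\|\mathcal{A}_\lambda u_\lambda(t)\|\le\|\mathcal{A}u_0\|$ and the closedness of $\mathcal{A}$ must be combined carefully, and where the restriction $u_0\in D(\mathcal{A})$ is essential---for general $u_0\in\mathcal{H}$ one recovers only a mild (continuous, non-differentiable) solution.
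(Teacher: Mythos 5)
The paper contains no proof of this statement to compare against: Theorem \ref{TH2} is quoted directly from Brezis's book \cite{Brezis}, and the authors use it as a black box. Your proposal reconstructs precisely the classical argument from that cited source: resolvent contraction from monotonicity, surjectivity of $I+\lambda\mathcal{A}$ for all $\lambda>0$ from the case $\lambda=1$, the Yosida approximation $\mathcal{A}_\lambda=\mathcal{A}J_\lambda$ with its bound $\|\mathcal{A}_\lambda w\|\le\|\mathcal{A}w\|$, global solvability of the approximate problems by Cauchy--Lipschitz, the decomposition $u_\lambda-u_\mu=\lambda\mathcal{A}_\lambda u_\lambda+(J_\lambda u_\lambda-J_\mu u_\mu)-\mu\mathcal{A}_\mu u_\mu$ with the cross term discarded by monotonicity, identification of the limit through closedness of $\mathcal{A}$, and uniqueness by monotonicity. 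All of these steps are correct as you state them.

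The one genuine gap is the step you yourself flagged as the main obstacle: the claim that ``running the same Cauchy argument on $v_\lambda:=\mathcal{A}_\lambda u_\lambda$'' yields uniform convergence of the derivatives. Your Cauchy estimate bounds $\|u_\lambda(t)-u_\mu(t)\|$ by $\sqrt{2(\lambda+\mu)t}\,\|\mathcal{A}u_0\|$, so its right-hand side involves the $\mathcal{A}$-norm of the initial datum. Applied to $v_\lambda$, whose datum is $\mathcal{A}_\lambda u_0$, the analogous estimate requires a bound on $\|\mathcal{A}_\lambda(\mathcal{A}_\lambda u_0)\|$ that is uniform in $\lambda$; for $u_0$ merely in $D(\mathcal{A})$ one only has $\|\mathcal{A}_\lambda\mathcal{A}_\lambda u_0\|\le(2/\lambda)\|\mathcal{A}u_0\|$, which blows up as $\lambda\to0$. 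The standard repair---the one carried out in Brezis---is to first prove the theorem for $u_0\in D(\mathcal{A}^2)$, where $\|\mathcal{A}_\lambda^2u_0\|\le\|\mathcal{A}^2u_0\|$ makes your argument close, and then extend to $u_0\in D(\mathcal{A})$ using the density of $D(\mathcal{A}^2)$ in $D(\mathcal{A})$ for the graph norm, the contraction property of the solution maps, and completeness of $D(\mathcal{A})$ under the graph norm. (Alternatively, since $\mathcal{A}$ is linear its graph is weakly closed, so the uniform bound $\|\mathcal{A}_\lambda u_\lambda(t)\|\le\|\mathcal{A}u_0\|$ gives $u(t)\in D(\mathcal{A})$ with $\mathcal{A}_\lambda u_\lambda(t)\rightharpoonup\mathcal{A}u(t)$ weakly; but then the strong continuity of $t\mapsto\mathcal{A}u(t)$, hence the asserted $C^1$ regularity, still requires additional work.) With that detour inserted, your proof is complete and coincides with the proof in the paper's cited reference.
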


The proof of Theorem \ref{TH1} will be established through several lemmas.

\begin{lemma}
	\label{L1} Let $T$ be the operator defined on $\mathcal{V}$ by 
	\begin{equation*}
		T\eta=-\eta_s,
	\end{equation*}
	with domain 
	\begin{equation*}
		D(T)=\left\{\eta\in\mathcal{V}:\eta_{s}\in\mathcal{V},~\underset{%
			s\rightarrow0}{\lim}\Vert\eta_{x}\Vert=0\right\}.
	\end{equation*}
	Then, for every $\eta\in D(T)$, we have 
	\begin{equation}  \label{FF}
		\langle T\eta,\eta\rangle_{\mathcal{V}}=-\frac{1}{2}\int_{0}^{+\infty
		}\kappa ^{\prime }(s)\left\Vert \eta _{x}(s)\right\Vert ^{2}ds
	\end{equation}
	and 
	\begin{equation}  \label{F}
		\delta\left\Vert \eta \right\Vert _{\mathcal{V}}^{2} \leq-\int_{0}^{+\infty
		}\kappa ^{\prime }(s)\left\Vert \eta _{x}(s)\right\Vert ^{2}ds.
	\end{equation}
\end{lemma}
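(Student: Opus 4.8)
The plan is to treat the two assertions separately: the identity \eqref{FF} is the genuine dissipativity computation and requires an integration by parts in the history variable $s$, while the estimate \eqref{F} is an immediate consequence of hypothesis (h4). I would dispose of \eqref{F} last, in one line, and concentrate the work on \eqref{FF}.

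For \eqref{FF}, I would start from the definition of the $\mathcal{V}$-inner product. Since $T\eta=-\eta_s$ and differentiation in $x$ and $s$ commute for $\eta\in D(T)$, we have $(T\eta)_x=-\eta_{sx}$, so that
\begin{equation*}
\langle T\eta,\eta\rangle_{\mathcal{V}}=-\int_0^{+\infty}\kappa(s)\langle\eta_{sx}(s),\eta_x(s)\rangle\,ds=-\frac12\int_0^{+\infty}\kappa(s)\frac{d}{ds}\|\eta_x(s)\|^2\,ds,
\end{equation*}
where I used $\langle\eta_{sx},\eta_x\rangle=\tfrac12\frac{d}{ds}\|\eta_x\|^2$. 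The heart of the proof is then to integrate by parts in $s$, which produces the claimed term $\tfrac12\int_0^{+\infty}\kappa'(s)\|\eta_x(s)\|^2\,ds$ together with the boundary contribution $-\tfrac12\big[\kappa(s)\|\eta_x(s)\|^2\big]_0^{+\infty}$.

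The main obstacle is to show that this boundary term vanishes. At $s=0$ it is harmless: the domain condition $\lim_{s\to0}\|\eta_x\|=0$ together with the continuity (hence local boundedness) of $\kappa$ forces $\kappa(s)\|\eta_x(s)\|^2\to0$. The delicate endpoint is $s=+\infty$. I would argue that the nonnegative function $g(s):=\kappa(s)\|\eta_x(s)\|^2$ is integrable, since $\int_0^{+\infty}g(s)\,ds=\|\eta\|_{\mathcal{V}}^2<\infty$; hence $\liminf_{s\to\infty}g(s)=0$ and there is a sequence $s_n\to+\infty$ with $g(s_n)\to0$. To legitimately pass to the limit I first note, via Cauchy--Schwarz and the fact that both $\eta$ and $\eta_s$ lie in $\mathcal{V}$, that $s\mapsto\kappa(s)\frac{d}{ds}\|\eta_x(s)\|^2$ belongs to $L^1(0,+\infty)$, because
\begin{equation*}
\int_0^{+\infty}\kappa(s)\Big|\tfrac{d}{ds}\|\eta_x(s)\|^2\Big|\,ds\le 2\Big(\int_0^{+\infty}\kappa(s)\|\eta_{sx}(s)\|^2\,ds\Big)^{1/2}\Big(\int_0^{+\infty}\kappa(s)\|\eta_x(s)\|^2\,ds\Big)^{1/2}<\infty.
\end{equation*}
I would then perform the integration by parts on the finite interval $[0,s_n]$ and let $n\to\infty$: the left-hand integral converges because its integrand is integrable, the boundary term at $s_n$ tends to $0$ by the choice of $s_n$, and the remaining term $\tfrac12\int_0^{s_n}\kappa'(s)\|\eta_x(s)\|^2\,ds$ converges, by monotone convergence since $\kappa'\le0$ makes the integrand of one sign, to $\tfrac12\int_0^{+\infty}\kappa'(s)\|\eta_x(s)\|^2\,ds$. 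This yields \eqref{FF} and, as a by-product, shows the right-hand side is finite.

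Finally, \eqref{F} follows directly from hypothesis (h4): multiplying $-\kappa'(s)\ge\delta\kappa(s)$ by $\|\eta_x(s)\|^2\ge0$ and integrating over $(0,+\infty)$ gives $-\int_0^{+\infty}\kappa'(s)\|\eta_x(s)\|^2\,ds\ge\delta\int_0^{+\infty}\kappa(s)\|\eta_x(s)\|^2\,ds=\delta\|\eta\|_{\mathcal{V}}^2$, which is exactly the asserted inequality.
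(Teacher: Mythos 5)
Your proof is correct and follows essentially the same route as the paper: the same rewriting of $\langle T\eta,\eta\rangle_{\mathcal V}$ as $-\frac{1}{2}\int_0^{+\infty}\kappa(s)\frac{d}{ds}\Vert\eta_x(s)\Vert^2\,ds$, the same integration by parts in $s$, and the same one-line use of (h4) for \eqref{F}; your handling of the boundary term at infinity (integrability of $\kappa(s)\Vert\eta_x(s)\Vert^2$ plus a subsequence $s_n\to\infty$ along which it vanishes, with the $L^1$ bound on $\kappa\,\frac{d}{ds}\Vert\eta_x\Vert^2$ justifying the passage to the limit) is if anything more careful than the paper's argument, and your treatment at $s=0$ uses the domain condition directly where the paper invokes $\eta_x(0)=0$ and Cauchy--Schwarz. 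Note that you land on $\langle T\eta,\eta\rangle_{\mathcal V}=+\frac{1}{2}\int_0^{+\infty}\kappa'(s)\Vert\eta_x(s)\Vert^2\,ds$, which is exactly what the paper's own proof obtains and what is used later in \eqref{e2}; the minus sign in the displayed identity \eqref{FF} is a typo in the statement, not an error in your argument.
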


\begin{proof}
	\begin{equation*}
		\begin{array}{rl}
			2\langle T\eta,\eta\rangle_{\mathcal{V}}&=\displaystyle-\int_{0}^{
				+\infty}\kappa(s)\frac{d}{ds}\Vert\eta_{x}(s)\Vert^2ds\\
			&=-\displaystyle\left.
			\kappa(s)\Vert\eta_{x}(s)\Vert^{2}\right\vert _{0}^{+\infty}
			+\int_{0}^{+\infty
			}\kappa^{\prime}(s)\Vert\eta_{x}(s)\Vert^{2}ds
	\end{array}\end{equation*}
	\begin{equation}\label{A3}
		2\langle T\eta,\eta\rangle_{\mathcal{V}}=\displaystyle\lim_{s\rightarrow0}\kappa(s)\Vert{\eta}_{x}(s)\Vert^{2}-\lim_{s\rightarrow\infty}\kappa(s)\Vert{\eta}_{x}(s)\Vert^{2}+\int_{0}^{+\infty
		}\kappa^{\prime}(s)\Vert\eta_{x}(s)\Vert^{2}ds.
	\end{equation}
	Since $\kappa\left(  s\right)  \Vert\eta_{x}\left(  s\right)  \Vert$ and
	$\kappa\left(  s\right)  \Vert\eta_{xs}\left(  s\right)  \Vert$ are in 
	$L^{1}\left(\mathbb{R}^{+}\right)  $ and $\eta_{x}(0)=0$, then, the first term in the right hand side of (\ref{A3}) worth
	\begin{align*}
		\lim_{s\rightarrow0}\kappa(s)\Vert{\eta}_{x}(s)\Vert^{2}  &  =\lim
		_{s\rightarrow0}\kappa(s)\left\Vert \int_{0}^{s}{\eta}_{xs}(\tau
		)d\tau\right\Vert ^{2},\\
		&  \leq\limsup_{s\rightarrow0}\left(  \int_{0}^{s}\kappa(s)^{1/2}\left\Vert
		{\eta}_{xs}(\tau)\right\Vert d\tau\right)  ^{2}.
	\end{align*}
	The use of Cauchy-Schwarz inequality, leads to%
	\[
	\lim_{s\rightarrow0}\kappa(s)\Vert{\eta}_{x}(s)\Vert^{2}\leq\limsup
	_{s\rightarrow0}s\int_{0}^{s}\kappa(\tau)\left\Vert {\eta}_{xs}(\tau
	)\right\Vert ^{2}d\tau=0.
	\]
	Therefore,%
	\[2\langle T\eta,\eta\rangle_{\mathcal{V}}=-\lim
	_{s\rightarrow\infty}\kappa(s)\Vert\eta_{x}(s)\Vert^{2}+\int_{0}^{+\infty
	}\kappa^{\prime}(s)\Vert\eta_{x}(s)\Vert^{2}ds.
	\]
	The left-hand side of the last equality is bounded, and from (h2) both terms
	of the right-hand side are non-positive, we infer that the above limit exists
	and is finite, and consequently equals zero. Thus%
	\[\langle T\eta,\eta\rangle_{\mathcal{V}}=\frac{1}{2}\int_{0}^{+\infty}\kappa^{\prime}(s)\Vert\eta_{x}(s)\Vert
	^{2}ds,
	\]
	Moreover, the use of the assumption (h4) on $\kappa $ leads to (\ref{F}), which completes the proof.
\end{proof}

\begin{remark}
	According to (\ref{FF}) and (\ref{e1}), the energy $E(t)$, satisfies 
	\begin{equation}  \label{e2}
		\frac{d}{dt}E(t)=\frac{1}{2}\int_{0}^{+\infty}\kappa^{\prime}(s)\Vert%
		\eta_{x}(s)\Vert^2ds\leq 0.
	\end{equation}
\end{remark}

\begin{lemma}
	\label{L2} The operator $\mathcal{A}$ defined by (\ref{8}) is monotone.
\end{lemma}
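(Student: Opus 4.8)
The plan is to verify the defining property of a monotone operator, namely that $\langle \mathcal{A}U,U\rangle_{\mathcal{H}}\ge 0$ for every $U=(u,v,\phi,\psi,\theta,\eta)^{T}\in D(\mathcal{A})$. I would compute this inner product directly from the definition of $\langle\cdot,\cdot\rangle_{\mathcal{H}}$ by inserting the six components of $\mathcal{A}U$ given in \eqref{8} into the first slot and $U$ into the second, and then integrate by parts in $x$ (and in $s$ for the history component) so as to move derivatives onto a common factor. The boundary conditions built into $D(\mathcal{A})$ are the essential tool here: Dirichlet for $u,v,\theta$, Neumann for $\phi,\psi$, and $\eta(0)=0$.

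The bulk of the work is showing that all conservative contributions cancel. For the wave block, integrating the term $\rho\int_{0}^{\pi}(-\tfrac{\mu}{\rho}u_{xx}-\tfrac{b}{\rho}\phi_{x})v\,dx$ by parts (using $v\in H_{0}^{1}$) produces $\mu\int u_{x}v_{x}+b\int\phi v_{x}$, which cancels the contributions coming from the first component $-v$ paired against $\mu u_{x}$ and $b\phi$. For the porosity block, the terms carrying $\xi,\alpha,b$ cancel by the same skew structure, where integrating $-\alpha\int\phi_{xx}\psi$ by parts discards the boundary term precisely because of the Neumann conditions $\phi_{x}(0)=\phi_{x}(\pi)=0$. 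The decisive cancellations are those involving $\beta$ and the memory kernel: the two $\beta$-contributions, one coming from the $\psi$-slot (the term in $\theta_{x}$) and one from the $\theta$-slot (the term in $\psi_{x}$), combine after one integration by parts into $\beta\int_{0}^{\pi}(\theta\psi)_{x}\,dx=\beta[\theta\psi]_{0}^{\pi}$, which vanishes since $\theta\in H_{0}^{1}$. Likewise the term $\int_{0}^{+\infty}\kappa(s)\int_{0}^{\pi}\theta_{x}\eta_{x}\,dx\,ds$ produced by the $\theta$-slot (after integrating $\int\theta\,\eta_{xx}$ by parts, again using $\theta(0)=\theta(\pi)=0$) cancels exactly against the $\theta_{x}$-part of the last slot, since $(\,-\theta+\eta_{s})_{x}=-\theta_{x}+\eta_{sx}$.

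What survives is only the genuinely dissipative piece coming from $\eta_{s}$ in the sixth component, namely $\int_{0}^{+\infty}\kappa(s)\int_{0}^{\pi}\eta_{sx}\eta_{x}\,dx\,ds=\langle\eta_{s},\eta\rangle_{\mathcal{V}}=-\langle T\eta,\eta\rangle_{\mathcal{V}}$. Invoking Lemma \ref{L1}, this equals $-\tfrac{1}{2}\int_{0}^{+\infty}\kappa^{\prime}(s)\Vert\eta_{x}(s)\Vert^{2}\,ds$, which is nonnegative by hypothesis (h2) since $\kappa^{\prime}\le 0$. Hence $\langle\mathcal{A}U,U\rangle_{\mathcal{H}}=-\tfrac{1}{2}\int_{0}^{+\infty}\kappa^{\prime}(s)\Vert\eta_{x}(s)\Vert^{2}\,ds\ge 0$, proving that $\mathcal{A}$ is monotone; as a sanity check this is exactly $-\tfrac{d}{dt}E(t)$ predicted by the energy identity \eqref{e1}. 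The main obstacle is purely one of bookkeeping: faithfully tracking the numerous boundary terms and confirming that each conservative pair truly cancels, in particular that the $\beta$-coupling collapses to a boundary term killed by the Dirichlet condition on $\theta$ and that the two $\theta_{x}\eta_{x}$ memory terms annihilate each other. The only substantive analytic input is Lemma \ref{L1}, which is already available.
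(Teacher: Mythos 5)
Your proposal is correct and follows essentially the same route as the paper: a direct computation of $\langle\mathcal{A}U,U\rangle_{\mathcal{H}}$, integration by parts with the boundary conditions to cancel all conservative couplings, and reduction to the single memory term $\int_{0}^{+\infty}\kappa(s)\langle\eta_{sx},\eta_{x}\rangle\,ds=-\langle T\eta,\eta\rangle_{\mathcal{V}}\ge 0$ via Lemma \ref{L1} and (h2). Your final expression $-\tfrac{1}{2}\int_{0}^{+\infty}\kappa'(s)\Vert\eta_{x}(s)\Vert^{2}ds\ge 0$ agrees with the identity actually established in the proof of Lemma \ref{L1} (and with the energy law \eqref{e2}), so nothing further is needed.
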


\begin{proof}
	A direct calculation using integration by parts and boundary conditions gives
	\begin{align*}
		\left\langle \mathcal{A}U,U\right\rangle _{\mathcal{H}} =&-\int_{0}^{\pi }%
		\Big[\left( \mu u_{xx}+b\phi _{x}\right) \Big]vdx-\mu\int_{0}^{\pi
		}v_{x}u_{x}dx-\xi \int_{0}^{\pi }\psi \phi dx-\alpha \int_{0}^{\pi }\psi
		_{x}\phi _{x}dx \\
		& -\int_{0}^{\pi }\Big[\alpha \phi _{xx}-\xi \phi -bu_{x}-\beta \theta \Big]%
		\psi dx-b\int_{0}^{\pi }\psi u_{x}dx-b\int_{0}^{\pi }v_{x}\phi dx \\
		& -\int_{0}^{\pi }\Big[-\beta \psi +\int_{0}^{+\infty }\kappa (s)\eta
		_{xx}\left( s\right) ds\Big]\theta dx\\
		&-\int_{0}^{+\infty }\kappa (s)\int_{0}^{\pi
		}\Big[\theta _{x}-\eta _{sx}(s)\Big]\eta _{x}\left( s\right) dx \\
		 =&\displaystyle\int_{0}^{+\infty }\kappa (s)
		\eta _{x}\eta_{sx}ds\\
		=&\dfrac{1}{2}\displaystyle\int_{0}^{+\infty }\kappa (s)\dfrac{d}{ds}\Vert
		\eta _{x}\left( s\right) \Vert ^{2}ds.
	\end{align*}
	Therefore, the result follows from Lemma \ref{L1}.
\end{proof}

\begin{lemma}
	\label{L3} The operator $\mathcal{A}$ defined by (\ref{8}) is maximal.
\end{lemma}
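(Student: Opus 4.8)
The plan is to use the Minty-type characterization of maximal monotone operators: since Lemma \ref{L2} already establishes the monotonicity of $\mathcal{A}$, it remains only to prove that $I+\mathcal{A}$ is surjective, i.e. that for every $F=(f_1,f_2,f_3,f_4,f_5,f_6)^{T}\in\mathcal{H}$ there is $U=(u,v,\phi,\psi,\theta,\eta)^{T}\in D(\mathcal{A})$ with $(I+\mathcal{A})U=F$. Reading off the six components of \eqref{8}, the first and third lines give the purely algebraic relations $v=u-f_1$ and $\psi=\phi-f_3$, while the last line becomes the transport equation $\eta_{s}+\eta=\theta+f_6$ subject to $\eta(0)=0$, whose unique solution I would write explicitly as $\eta(s)=(1-e^{-s})\theta+\int_{0}^{s}e^{-(s-\tau)}f_6(\tau)\,d\tau$. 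Thus $v$, $\psi$ and $\eta$ are expressed through $u$, $\phi$, $\theta$ and the data.

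Substituting these relations into the remaining three equations reduces the problem to a coupled elliptic system for $(u,\phi,\theta)$. The memory term becomes $\int_{0}^{+\infty}\kappa(s)\eta_{xx}(s)\,ds=g_{0}\,\theta_{xx}+h$, where $g_{0}:=\int_{0}^{+\infty}\kappa(s)(1-e^{-s})\,ds$ is finite and strictly positive by (h1)--(h3) and $h$ gathers the known contributions of $f_6$. After multiplying the three equations by $\rho$, $J$ and $c$, the reduced system reads
\[
\rho u-\mu u_{xx}-b\phi_{x}=\rho(f_1+f_2),\quad (J+\xi)\phi-\alpha\phi_{xx}+bu_{x}+\beta\theta_{x}=J(f_3+f_4),\quad c\theta-g_{0}\theta_{xx}+\beta\phi_{x}=cf_5+\beta f_{3,x}+h,
\]
posed on $H_{0}^{1}\times H_{\ast}^{1}\times H_{0}^{1}$. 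I would treat it variationally: testing by $\bar u\in H_{0}^{1}$, $\bar\phi\in H_{\ast}^{1}$, $\bar\theta\in H_{0}^{1}$ and integrating by parts yields a bilinear form $\mathcal{B}$ on $H_{0}^{1}\times H_{\ast}^{1}\times H_{0}^{1}$ whose boundedness is immediate from the Cauchy--Schwarz inequality and the continuous embeddings, after which the Lax--Milgram theorem provides a unique weak solution.

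The decisive step is the coercivity of $\mathcal{B}$. The crucial observation is that the thermoelastic coupling terms combine, after integration by parts, into $\beta\int_{0}^{\pi}(\phi\theta)_{x}\,dx=\beta[\phi\theta]_{0}^{\pi}=0$ because of the boundary conditions $\theta(0)=\theta(\pi)=0$; hence the $\beta$-coupling disappears from the diagonal and
\[
\mathcal{B}\big((u,\phi,\theta),(u,\phi,\theta)\big)=\int_{0}^{\pi}\big(\rho u^{2}+\mu u_{x}^{2}+2bu_{x}\phi+(J+\xi)\phi^{2}+\alpha\phi_{x}^{2}+c\theta^{2}+g_{0}\theta_{x}^{2}\big)\,dx.
\]
Applying the identity \eqref{6} to the elastic part and invoking $\mu\xi>b^{2}$ together with Poincar\'e's inequality (valid for $u\in H_{0}^{1}$ and for the mean-free $\phi\in H_{\ast}^{1}$), I would then obtain $\mathcal{B}(\cdot,\cdot)\ge \delta_{0}\big(\|u\|_{H^{1}}^{2}+\|\phi\|_{H^{1}}^{2}+\|\theta\|_{H^{1}}^{2}\big)$ for some $\delta_{0}>0$, which is the required coercivity and needs no hypothesis beyond $\mu\xi>b^{2}$. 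Finally, elliptic regularity upgrades the weak solution to $u,\phi\in H^{2}$ and $\theta\in H^{2}\cap H_{0}^{1}$ with $\int_{0}^{+\infty}\kappa(s)\eta_{xx}(s)\,ds\in L^{2}$, while the explicit formula for $\eta$ shows $\eta\in\mathcal{V}$, $\eta_{s}\in\mathcal{V}$ and $\eta(0)=0$, so that $U\in D(\mathcal{A})$ and $R(I+\mathcal{A})=\mathcal{H}$. I expect the main obstacles to be precisely this coercivity bookkeeping---checking the cancellation of the $\beta$-terms and the correct use of \eqref{6}---and the verification that the reconstructed memory variable $\eta$ genuinely belongs to $\mathcal{V}$ with $\int\kappa\,\eta_{xx}\,ds\in L^{2}$, which rests on the integrability assumptions (h1)--(h3) on $\kappa$.
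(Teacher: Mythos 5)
Your proposal is correct and follows essentially the same route as the paper: eliminate $v$, $\psi$, $\eta$ from $(I+\mathcal{A})U=F$, reduce to an elliptic system for $(u,\phi,\theta)$, apply Lax--Milgram with coercivity coming from $\mu\xi>b^{2}$ and the cancellation of the $\beta$-coupling terms on the diagonal, and then recover the remaining components and the regularity needed for $U\in D(\mathcal{A})$. The only details the paper spells out that you defer are the verification that the $f^{6}$-contribution $h$ defines a bounded linear functional on $H_{0}^{1}$ and the elliptic-regularity step for $\phi$ in the mean-zero space $H_{\ast}^{1}$ (which requires a small detour via test functions in $H_{0}^{1}$), both of which go through exactly as you anticipate.
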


\begin{proof}
	Let $F=(f^{1},f^{2},f^{3},f^{4},f^{5},f^{6})\in 
	\mathcal{H}$, we will find  $U\in D(\mathcal{A})$ unique, such that $(I-
	\mathcal{A})U=F$, that is the equation becomes the system 
	\begin{align}
		u-v& =f^{1}, \label{9} \\
		\rho v-\mu u_{xx}-b\phi _{x}& =\rho f^{2}, \label{10} \\
		\phi -\psi & =f^{3}, \label{11} \\
		J\psi -\alpha \phi _{xx}+\xi \phi +bu_{x}+\beta \theta & =Jf^{4},\label{12} \\
		c\theta +\beta \psi -\displaystyle\int_{0}^{+\infty }\kappa (s)\eta _{xx}(s)ds&
		=cf^{5}, \label{13}\\
		\eta (s)-\theta +\eta _{s}(s)& =f^{6}.  \label{14}
	\end{align}%
	By solving \eqref{14} subject to the initial condition $\eta (0)=0$, we get 
	\begin{equation}\label{15a}
		\eta (s)=(1-e^{s})\theta +\int_{0}^{s}e^{y-s}f^{6}(y)dy.
	\end{equation}
	Next, substituting $v,\psi$ and $\eta$ from \eqref{9}, \eqref{11} and  \eqref{15a} into (\ref{10}), (\ref{12}) and (\ref{14}), we get
	\begin{equation}
		\left\{ 
		\begin{array}{l}
			au_{xx}+b\phi _{x}-\rho u=-\rho (f^{1}+f^{2}), \\ 
			\alpha \phi _{xx}-bu_{x}+\beta \theta -(J+\xi )\phi =-J\left(
			f^{3}-f^{4}\right)  \\ 
			c_{\mu }\theta _{xx}-c\theta -\beta \phi =-\beta f^{3}-cf^{5}-\displaystyle%
			\int_{0}^{+\infty }\kappa (s)\int_{0}^{s}e^{y-s}f_{xx}^{6}(y)dy,
		\end{array}%
		\right.   \label{15}
	\end{equation}%
	where 
	\begin{equation*}
		c_{\kappa }=\int_{0}^{+\infty }\kappa (s)(1-e^{-s})ds.
	\end{equation*}%
	The last term in the right-hand side of the third equation of \eqref{15}
	belongs to $H^{-1}(0,\pi)$. Indeed, let $\varphi \in H_{0}^{1}(0,\pi)$ such that $\Vert
	\varphi _{x}\Vert \leq 1,$ then 
	\begin{equation*}
		\begin{array}{ll}
			\Big\vert\Big\langle\displaystyle\int_{0}^{+\infty }\kappa (s)\Big(%
			\int_{0}^{s}e^{y-s}f_{xx}^{6}(y)dy\Big)ds,\varphi \Big\rangle\Big\vert= & %
			\Big\vert\Big\langle\displaystyle\int_{0}^{+\infty }\kappa (s)\Big(%
			\int_{0}^{s}e^{y-s}f_{x}^{6}(y)dy\Big)ds,\varphi _{x}\Big\rangle\Big\vert \\ 
			& \leq \displaystyle\int_{0}^{+\infty }\kappa (s)e^{-s}\Big(\int_{0}^{s}e^{y}%
			\Vert f_{x}^{6}(y)\Vert dy\Big)ds \\ 
			& = \displaystyle\int_{0}^{+\infty }e^{y}\Vert f_{x}^{6}(y)\Vert
			\int_{y}^{+\infty}\kappa (s)e^{-s}dsdy \\ 
			& \leq \displaystyle\int_{0}^{+\infty }\kappa (y)e^{y}\Vert f_{x}^{6}(y)\Vert
			\int_{y}^{+\infty}e^{-s}dsdy \\ 
			& \leq \displaystyle\int_{0}^{+\infty }\kappa(y)\Vert f_{x}^{6}(y)\Vert <+\infty.
		\end{array}%
	\end{equation*}%
	
	In view of the above, we multiply the equations (\ref{15})$_{1},$(\ref{15})$_{2}$ and
	(\ref{15})$_{3}$ by $\widetilde{u},\widetilde{\phi}$ and $\widetilde
	{\theta}$ respectively, integrating over $\left(  0,\pi\right)  $ and summing
	up, we obtain the following variational formulation 
	\begin{equation}
		B((u,\phi ,\theta ),(u^{\ast },\phi ^{\ast },\theta ^{\ast }))=L(u^{\ast
		},\phi ^{\ast },\theta ^{\ast })  \label{16}
	\end{equation}	where, 	\begin{equation*}
		\begin{array}{ll}
			B((u,\phi ,\theta ),(u^{\ast },\phi ^{\ast },\theta ^{\ast }))=&\mu\displaystyle%
			\int_{0}^{\pi }u_{x}u_{x}^{\ast }dx-b\int_{0}^{\pi }\phi _{x}u^{\ast
			}dx+\rho \int_{0}^{\pi }uu^{\ast }dx\\
		&+\displaystyle\alpha \int_{0}^{\pi }\phi _{x}\phi
			_{x}^{\ast }dx +b\int_{0}^{\pi }u_{x}\phi ^{\ast }dx-\beta \int_{0}^{\pi
			}\theta \phi ^{\ast }dx\\
		&\displaystyle+(J+\xi )\int_{0}^{\pi }\phi \phi ^{\ast }dx+c_{\kappa
			}\int_{0}^{\pi }\theta _{x}\theta _{x}^{\ast }dx\\
		&\displaystyle+c\int_{0}^{\pi }\theta
			\theta ^{\ast }dx 
			+\beta \int_{0}^{\pi }\phi \theta ^{\ast }dx,%
		\end{array}%
	\end{equation*} 
	is the bilinear form over $\mathcal{W}=H_{0}^{1}\left( 0,\pi \right) \times H^{1}\left( 0,\pi
	\right) \times L^{2}\left( 0,\pi \right) $,
	and  
	\begin{equation*}
		\begin{array}{ll}
			L(u^{\ast },\phi ^{\ast },\theta ^{\ast })=&\rho \displaystyle\int_{0}^{\pi
			}(f^{1}+f^{2})u^{\ast }dx+J\int_{0}^{\pi }\left[ f^{3}+f^{4}\right] \phi
			^{\ast }dx+\int_{0}^{\pi }(\beta f^{3}+cf^{5})\theta ^{\ast }dx \\ 
			&+\displaystyle\int_{0}^{\pi }\theta ^{\ast }\int_{0}^{+\infty }\kappa(s)\Big(%
			\int_{0}^{s}e^{y-s}f_{xx}^{6}(y)dy\Big)dsdx%
		\end{array}%
	\end{equation*} is a linear form.
	Clearly, $B$ and $L$ are continuous. Moreover, a straightforward calculation shows that 
	\begin{equation*}
		\begin{array}{ll}
			B(U,U) & \geq  \displaystyle\dfrac{1}{2}(\mu-\dfrac{b^{2}}{%
				\xi })\int_{0}^{\pi }u_{x}^{2}dx+c\int_{0}^{\pi }\theta
			^{2}dx+J\int_{0}^{\pi }\phi ^{2}dx+\alpha \int_{0}^{\pi }\phi _{x}^{2}dx, \\
			& \geq C\Vert U\Vert_{\mathcal{W}} ^{2},%
		\end{array}%
	\end{equation*}
	Thus, $B$ is coercive.
	Consequently, Lax-Milgram theorem guarantees the
	existence of a unique $(u,\phi ,\theta )\in \mathcal{W}$ satisfying %
	\eqref{16}.
	
	Next, we take $(u^{\ast },\phi ^{\ast },\theta ^{\ast })=(u^{\ast
	},0,0)$ in \eqref{16} to get 
	\begin{equation*}
		\displaystyle \mu\int_{0}^{\pi }u_{x}u_{x}^{\ast }dx=\int_{0}^{\pi }(b\phi
		_{x}-\rho u+\rho \left( f^{1}+f^{2}\right) )u^{\ast }dx.
	\end{equation*}%
	Standard arguments of elliptic equations infer tha that $u\in H^2(0,\pi)$ and
	\begin{equation*}
		u_{xx}=-\frac{1}{\mu}\left(b\phi_{x}-\rho u+\rho(f^1 +f^2)\right).
	\end{equation*} Therefore, 
	\begin{equation*}
		u\in H^{2}\left( 0,\pi \right) \cap H_{0}^{1}\left( 0,\pi \right).
	\end{equation*}%
	Similarly, by taking $(u^{\ast },\phi ^{\ast },\theta ^{\ast })=(0,\phi ^{\ast },0)$, 
	we get \begin{equation}\label{T1}
		\alpha \int_{0}^{\pi }\phi _{x}\phi
		_{x}^{\ast }dx=\int_{0}^{\pi }\left(-bu_{x} +\beta \theta -(J+\xi )\phi+J\left( f^{3}+f^{4}\right) \right)\phi
		^{\ast }dx,~\forall \phi^{\star}\in H_{\star}^1(0,\pi).
	\end{equation}
	Here, we are not able to apply elliptic arguments. To do so, we proceed as follows:\\
	Let $\psi\in H^1_0(0,\pi)$ and set $\phi^{\star}=\psi-\int_{0}^{\pi}\psi(x)dx$. Clearly $\phi^{\star}\in H_{\star}^1(0,\pi)$. Plugging, $\phi^{\star}$ in (\ref{T1}) taking into account that $-bu_{x} +\beta \theta -(J+\xi )\phi+J\left( f^{3}+f^{4}\right) \in L_{\star}^2(0,\pi)$, we obtain
	\begin{equation*}
		\alpha \int_{0}^{\pi }\phi _{x}\psi_{x}dx=\int_{0}^{\pi }\left(-bu_{x} +\beta \theta -(J+\xi )\phi+J\left( f^{3}+f^{4}\right) \right)\psi dx,~\forall \psi\in H_{0}^1(0,\pi).
	\end{equation*}
	Thus,
	\begin{equation*}
		\phi \in H^{2}\left( 0,\pi \right) \cap H_{\ast }^{1}\left( 0,\pi \right),
	\end{equation*}
	with \begin{equation*}
		\phi_{xx}=-\frac{1}{\alpha}(-bu_{x} +\beta \theta -(J+\xi )\phi+J\left( f^{3}+f^{4}\right)).
	\end{equation*}
	Finally,  we take $(u^{\ast },\phi ^{\ast },\theta ^{\ast })=(0,0,\theta
	^{\ast })$ to get $\theta \in H_{0}^{1}\left( 0,\pi \right) .$\\
	Back to \eqref{9},\eqref{11} and \eqref{15a} taking into account the last results, we
	get 
	\begin{equation*}
		v\in H_{0}^{1}\left( 0,\pi \right) ,~~\psi \in H_{\ast
		}^{1}\left( 0,\pi \right) ~~\mbox{and}~~\eta \in
		H_{\kappa}^{1}((0,+\infty );H_{0}^{1}).
	\end{equation*}%
	Furthermore, (\ref{13}) yields
	\[
	\int_{0}^{+\infty}\kappa\left(  s\right)  \eta_{xx}^{t}\left(  s\right)  ds\in
	L^{2}(0,\pi).
	\]
	
	Hence, the solution $U$ belongs to $D(\mathcal{A})$ and then $1$ is in the resolvent set of $\mathcal{A}$, which completes the proof of the Lemma.
	
\end{proof}

\begin{proof}[Proof. (of Theorem \ref{TH1})]
	From Lammas \ref{L2}, \ref{L3} and by Hille-Yoside Theorem \ref{TH2}, the problem (\ref{7}) has a unique solution.
\end{proof} 

\begin{remark}
	The operator maximal and monotone $\mathcal{A}$ generates a C$_{0}-$%
	semigroup of contractions $S(t)=e^{-\mathcal{A}t}$, and the solution of (\ref%
	{7}) is given by 
	\begin{equation*}
		U(x,t)=S(t)U_{0}(x),~\forall t\geq 0.
	\end{equation*}%
	Moreover, if $U_{0}\in \mathcal{H}$, then the solution $U$ is a weak
	solution and satisfies 
	\begin{equation*}
		U\in C((0,+\infty );\mathcal{H}).
	\end{equation*}
\end{remark}

\section{Exponential stability}

In this section we state and prove the stability result of our problem. 
\newline
First, define 
\begin{equation}
	\gamma _{g}=c\mu -\rho g\left( 0\right)
\end{equation}
and for $\gamma _{g}\neq 0,$ we introduce the stability number 
\begin{equation}
	\chi _{g}=\frac{\rho }{\mu }-\frac{J}{\alpha }+\frac{\rho \beta ^{2}}{\alpha
		\gamma _{g}}.
\end{equation}%
The main result reads as follow:

\begin{theorem}
	\label{TH3} Let $(u,\phi ,\theta ,\eta )$ be the solution of (\ref{1})
	subjected to the initial and boundary conditions (\ref{3}),(\ref{A2})
	respectively. Assume that $\gamma _{g}\neq 0$ and $\chi _{g}=0,$ then, the
	energy $E\left( t\right) $ associated to the solution $(u,\phi ,\theta ,\eta
	)$ is exponentially stable, that there exist two positive constants $\sigma
	,\omega $ such that%
	\begin{equation*}
		E\left( t\right) \leq \sigma e^{-\omega t},\;\forall t\geq 0.
	\end{equation*}
\end{theorem}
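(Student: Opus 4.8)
The plan is to apply the frequency-domain (Gearhart--Pr\"uss--Huang) characterization of exponential stability for contraction semigroups. Since $-\mathcal{A}$ generates the contraction semigroup $S(t)=e^{-\mathcal{A}t}$, the energy decays exponentially if and only if
\[
i\mathbb{R}\subset\rho(-\mathcal{A})\qquad\text{and}\qquad\sup_{\lambda\in\mathbb{R}}\big\|(i\lambda I+\mathcal{A})^{-1}\big\|_{\mathcal{L}(\mathcal{H})}<\infty .
\]
I would obtain both conditions at once by a single argument by contradiction: assuming they fail, there exist $\lambda_{n}\in\mathbb{R}$ and $U_{n}=(u_{n},v_{n},\phi_{n},\psi_{n},\theta_{n},\eta_{n})\in D(\mathcal{A})$ with $\|U_{n}\|_{\mathcal{H}}=1$ and $F_{n}:=(i\lambda_{n}I+\mathcal{A})U_{n}\to0$ in $\mathcal{H}$, and I will derive $\|U_{n}\|_{\mathcal{H}}\to0$. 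Componentwise the resolvent equation reads
\[
\begin{aligned}
&i\lambda_n u_n - v_n = f^1_n, &\quad& i\lambda_n v_n - \tfrac{\mu}{\rho}u_{n,xx} - \tfrac{b}{\rho}\phi_{n,x} = f^2_n,\\
&i\lambda_n \phi_n - \psi_n = f^3_n, &\quad& i\lambda_n \psi_n - \tfrac{\alpha}{J}\phi_{n,xx} + \tfrac{\xi}{J}\phi_n + \tfrac{b}{J}u_{n,x} + \tfrac{\beta}{J}\theta_n = f^4_n,\\
&i\lambda_n \theta_n + \tfrac{\beta}{c}\psi_n - \tfrac{1}{c}\int_0^{+\infty}\kappa(s)\eta_{n,xx}(s)\,ds = f^5_n, &\quad& i\lambda_n \eta_n - \theta_n + \eta_{n,s} = f^6_n.
\end{aligned}
\]

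The first step is to extract the dissipation. Taking the real part of $\langle(i\lambda_{n}I+\mathcal{A})U_{n},U_{n}\rangle=\langle F_{n},U_{n}\rangle$ annihilates the term $i\lambda_{n}\|U_{n}\|^{2}$, and Lemma~\ref{L1} (identity \eqref{FF} and inequality \eqref{F}) yields
\[
\frac{\delta}{2}\,\|\eta_{n}\|_{\mathcal{V}}^{2}\le-\frac{1}{2}\int_{0}^{+\infty}\kappa'(s)\|\eta_{n,x}(s)\|^{2}\,ds=\mathrm{Re}\,\langle F_{n},U_{n}\rangle\le\|F_{n}\|_{\mathcal{H}}\longrightarrow0 .
\]
Hence $\|\eta_{n}\|_{\mathcal{V}}\to0$. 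This is the only damping present, so the entire difficulty lies in propagating it to the other five components. I would then run the standard chain of multiplier estimates on the identities above, testing each resolvent equation against a suitable combination of the unknowns (or against an auxiliary function solving a lower-order elliptic problem) and integrating by parts, to obtain successively $\theta_{n}\to0$ in $L^{2}$, $\psi_{n}\to0$ in $L^{2}$, $\phi_{n}\to0$ in $H^{1}$, and finally $v_{n}\to0$ and $u_{n}\to0$. Passing from the history variable to $\theta_{n}$ uses the sixth identity together with (h1)--(h4) and the normalization $\int_{0}^{+\infty}\kappa(s)\,ds=g(0)$; the constant $\gamma_{g}=c\mu-\rho g(0)\ne0$ enters here through the heat equation.

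The hard part, and the place where the hypothesis $\chi_{g}=0$ is indispensable, is the last link: the elastic pair $(u_{n},v_{n})$ is coupled to the dissipation only indirectly, through $\phi_{n}$ and then $\theta_{n}$. Controlling $\|u_{n,x}\|$ and $\|v_{n}\|$ therefore forces a multiplier that mixes $u_{n,x}$, $\phi_{n}$ and $\theta_{n}$ (alongside the quantities already shown to be small), and this multiplier generates a cross term of order $\lambda_{n}$ whose coefficient is exactly the combination measured by $\chi_{g}=\frac{\rho}{\mu}-\frac{J}{\alpha}+\frac{\rho\beta^{2}}{\alpha\gamma_{g}}$. The estimate closes only when this coefficient vanishes, i.e. $\chi_{g}=0$, which plays the role of the equal-wave-speeds balance familiar from \cite{Oro,Fareh5}. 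Once all six components tend to $0$ we reach $\|U_{n}\|_{\mathcal{H}}\to0$, contradicting $\|U_{n}\|_{\mathcal{H}}=1$; this simultaneously rules out imaginary spectrum and yields the uniform resolvent bound, and the exponential decay $E(t)\le\sigma e^{-\omega t}$ follows from the Gearhart--Pr\"uss--Huang theorem. The expected main obstacle is precisely constructing that last multiplier and verifying algebraically that the obstructive $O(\lambda_{n})$ term is annihilated if and only if $\chi_{g}=0$.
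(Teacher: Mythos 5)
Your overall strategy --- the Gearhart--Pr\"uss--Huang frequency-domain criterion, arguing by contradiction from a sequence $\lambda_n$, $U_n$ with $\|U_n\|_{\mathcal H}=1$ and $(i\lambda_nI+\mathcal A)U_n\to0$ --- is a legitimate and genuinely different route from the paper, which works entirely in the time domain: it builds four auxiliary functionals $F_1,\dots,F_4$, assembles the Lyapunov functional $\mathscr L(t)=NE(t)+N_1F_1+\varepsilon_2^{-1}F_2+\tfrac{\rho}{4\varepsilon_3}F_3+F_4$, shows $\mathscr L\sim E$ and $\tfrac{d}{dt}\mathscr L\le-\gamma E$, and concludes by Gronwall. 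Your first step (the dissipation identity giving $\|\eta_n\|_{\mathcal V}\to0$ via Lemma~\ref{L1} and (h4)) is correct and is the frequency-domain analogue of the paper's identity \eqref{e2}.

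However, as written the proposal has a genuine gap: everything after that first step is announced rather than proved. The entire mathematical content of the theorem lies in the chain $\eta_n\to\theta_n\to\psi_n\to\phi_n\to(u_n,v_n)$ and, above all, in exhibiting the specific multiplier whose problematic cross term carries the coefficient $\chi_g$. You state that such a multiplier ``generates a cross term of order $\lambda_n$ whose coefficient is exactly $\chi_g$'' and that ``the estimate closes only when this coefficient vanishes,'' but you neither construct the multiplier nor perform the cancellation; you explicitly defer it as ``the expected main obstacle.'' For comparison, this is exactly where the paper does all its work: the functional $F_3=I_1+\tfrac{\rho\beta\mu}{b\gamma_g}I_2+\tfrac{\mu c}{\gamma_g}I_3$ is tuned so that the coefficients of $\langle\theta_x,u_x\rangle$ and $\langle\theta,\phi_x\rangle$ collapse and the only surviving bad term is $-\tfrac{\mu\alpha}{b}\chi_g\langle\phi_t,u_{tx}\rangle$, which vanishes precisely when $\chi_g=0$; the weights $\tfrac{\rho\beta\mu}{b\gamma_g}$ and $\tfrac{\mu c}{\gamma_g}$ are where $\gamma_g\neq0$ is used. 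Without producing the frequency-domain counterpart of this computation, the proof does not exist. Two further points are left unaddressed and would need care in your framework: (i) the dichotomy between bounded and unbounded $\lambda_n$ (for bounded $\lambda_n$ one must separately exclude imaginary eigenvalues, since $-\mathcal A$ has no compact resolvent here and $0$ is a delicate point because $\psi$ lives in the mean-zero space $H_*^1$); and (ii) the passage from $\|\eta_n\|_{\mathcal V}\to0$ to $\theta_n\to0$ through the sixth resolvent equation, which requires integrating against $\kappa$ and controlling $\int_0^{+\infty}\kappa'(s)\eta_n(s)\,ds$ --- this is the analogue of the paper's estimate \eqref{25} and is not automatic.
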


The proof of Theorem \ref{TH3} will be done by the multipliers method and
will be established through several lemmas.

\begin{lemma}
	Let $(u,\phi ,\theta ,\eta )$ be a solution of (\ref{1}), then the
	functional 
	\begin{equation*}
		F_{1}(t)=-\frac{2c}{g(0)}\int_{0}^{+\infty }\kappa (s)\left\langle \theta
		(t),\eta ^{t}(s)\right\rangle ds
	\end{equation*}%
	satisfies, for any $\varepsilon _{1}>0$, the estimate 
	\begin{equation}
		\frac{d}{dt}F_{1}(t)\leq -c\left\Vert \theta \right\Vert ^{2}-\left\Vert
		\eta \right\Vert ^{2}+\varepsilon _{1}\left\Vert \phi _{t}\right\Vert
		^{2}-M\left( 1+\frac{1}{\varepsilon _{1}}\right) \int_{0}^{+\infty }\kappa
		^{\prime }\left( s\right) \Vert \eta _{x}(s)\Vert ^{2}ds,  \label{25}
	\end{equation}%
	where $M$ is a positive constant independent of $\varepsilon _{1}.$
\end{lemma}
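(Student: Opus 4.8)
The plan is to differentiate $F_1$ along the solution, substitute the four equations of \eqref{1}, and reorganize everything into the temperature $\theta$, the porous velocity $\phi_t$, and the dissipation integral $\int_0^{+\infty}\kappa'(s)\|\eta_x(s)\|^2\,ds$ furnished by Lemma \ref{L1}. Writing $\langle\cdot,\cdot\rangle$ for the $L^2(0,\pi)$ inner product, the product rule splits $\frac{d}{dt}F_1$ into a term carrying $\theta_t$ and a term carrying $\eta^t_t$, and I would treat these two separately.

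In the $\eta^t_t$ term I would insert the transport equation \eqref{1a}, $\eta^t_t=\theta-\eta^t_s$. The contribution of $\theta$ is $-\frac{2c}{g(0)}\big(\int_0^{+\infty}\kappa(s)\,ds\big)\|\theta\|^2$, which by hypothesis (h3) equals exactly $-2c\|\theta\|^2$; this is the source of the leading negative temperature term, and its surplus factor $2$ is meant to absorb the positive $\|\theta\|^2$ produced later by Young's inequality, leaving the stated $-c\|\theta\|^2$. The remaining $\eta^t_s$ contribution I would integrate by parts in $s$. The boundary terms at $s=0$ and $s=\infty$ vanish for the same reasons used in the proof of Lemma \ref{L1}, namely $\eta_x(0)=0$ together with the integrability of $\kappa\|\eta_x\|$, so this piece reduces to $-\frac{2c}{g(0)}\int_0^{+\infty}\kappa'(s)\langle\theta,\eta^t(s)\rangle\,ds$, which Young's and Poincar\'e's inequalities bound by a small multiple of $\|\theta\|^2$ plus a multiple of $-\int_0^{+\infty}\kappa'(s)\|\eta_x(s)\|^2\,ds$.

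In the $\theta_t$ term I would substitute the heat equation (the third equation of \eqref{1}), $c\theta_t=-\beta\phi_{xt}+\int_0^{+\infty}\kappa(\sigma)\eta^t_{xx}(\sigma)\,d\sigma$, and integrate by parts in $x$, using the Dirichlet conditions on $\theta$ and $\eta$ and the Neumann condition on $\phi$; this produces a coupling piece and a quadratic memory piece. The coupling piece is $-\frac{2\beta}{g(0)}\int_0^{+\infty}\kappa(s)\langle\phi_t,\eta^t_x(s)\rangle\,ds$, and since $\phi_t$ is independent of $s$, Cauchy--Schwarz in the measure $\kappa(s)\,ds$ (using $\int\kappa=g(0)$) followed by Young's inequality controls it by $\varepsilon_1\|\phi_t\|^2+\frac{C}{\varepsilon_1}\|\eta\|_{\mathcal V}^2$. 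Hypothesis (h4), in the form $\kappa(s)\le-\delta^{-1}\kappa'(s)$, then lets me replace every $\|\eta\|_{\mathcal V}^2=\int\kappa\|\eta_x\|^2$ arising in these estimates by $-\delta^{-1}\int\kappa'\|\eta_x\|^2$, which is how the factor $M(1+\varepsilon_1^{-1})$ multiplying the dissipation integral is assembled.

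The main obstacle is the quadratic memory piece coming from the convolution in the heat equation, namely $-\frac{2}{g(0)}\langle\int\kappa\,\eta^t_{xx}\,ds,\int\kappa\,\eta^t\,ds\rangle$. After integrating by parts in $x$ this becomes a perfect-square history term, and the delicate point is to estimate it by $\|\eta\|_{\mathcal V}^2$ via Cauchy--Schwarz in $\kappa(s)\,ds$ and then fold it, through (h4), into the dissipation integral, so that it does not spoil the negativity of the $-\|\eta\|^2$ contribution. Keeping careful track of the signs here, and verifying the vanishing of the $s$-boundary terms, is where the computation must be done most carefully; collecting all the estimates and choosing the constants then yields \eqref{25}.
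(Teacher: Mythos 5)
Your proposal is correct and follows essentially the same route as the paper: split $\frac{d}{dt}F_1$ into the $\theta_t$ and $\eta_t$ contributions, substitute \eqref{1}$_3$ and \eqref{1}$_4$, extract $-2c\|\theta\|^2$ from (h3), integrate the $\eta_s$ piece by parts in $s$, and control the coupling and quadratic memory pieces via Cauchy--Schwarz in $\kappa(s)\,ds$ and (h4). The only point to make explicit is that the $-\|\eta\|_{\mathcal V}^2$ term in \eqref{25} is not pre-existing but must be manufactured from the quadratic memory piece itself, by writing $2\|\eta\|_{\mathcal V}^2\le 3\|\eta\|_{\mathcal V}^2-\|\eta\|_{\mathcal V}^2$ and absorbing the $3\|\eta\|_{\mathcal V}^2$ into the dissipation integral via \eqref{F}, exactly as in the paper's estimate \eqref{23}.
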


\begin{proof}
	Differentiating $F_{1}(t)$, then, using (\ref{1})$_{3}$, (\ref{1})$_{4}$ and
	integration by parts, we infer 
	\begin{align}
		\frac{d}{dt}F_{1}(t) =&-\frac{2c}{g(0)}\int_{0}^{+\infty }\kappa
		(s)\left\langle \theta _{t}(t),\eta (s)\right\rangle ds-\frac{2c}{g(0)}%
		\int_{0}^{+\infty }\kappa (s)\left\langle \theta (t),\eta
		_{t}(s)\right\rangle ds,  \notag \\
		=&-\frac{2}{g(0)}\int_{0}^{+\infty }\kappa (s)\left\langle -\beta \phi
		_{xt}+\int_{0}^{+\infty }\kappa (s)\eta _{xx}(x,s)ds,\eta (s)\right\rangle ds\notag\\
		&-\frac{2c}{g(0)}\int_{0}^{+\infty }\kappa (s)\left\langle \theta ,\theta -\eta
		_{s}(s)\right\rangle ds,  \notag \\
		=&-\frac{2\beta }{g(0)}\int_{0}^{+\infty }\kappa (s)\left\langle \phi
		_{t},\eta _{x}(s)\right\rangle ds+\frac{2}{g(0)}\left\Vert \int_{0}^{+\infty
		}\kappa (s)\eta _{x}(s)ds\right\Vert ^{2} \notag  \\
		& -2c\left\Vert \theta \right\Vert ^{2}+\frac{2c}{g(0)}\int_{0}^{+\infty
		}\kappa (s)\left\langle \theta ,\eta _{s}(s)\right\rangle ds. \label{20}
	\end{align}%
	First, integrating by parts with respect to $s$, using Cauchy Schwarz,
	Young's and Poincar\'{e}'s inequalities we get 
	\begin{align}
		\frac{2c}{g(0)}\int_{0}^{+\infty }\kappa (s)\left\langle \theta ,\eta
		_{s}(s)\right\rangle ds& =-\frac{2c}{g(0)}\left\langle \theta
		,\int_{0}^{+\infty }\kappa ^{\prime }(s)\eta (s)ds\right\rangle   \notag \\
		& \leq c\left\Vert \theta \right\Vert ^{2}-M\int_{0}^{+\infty }\kappa
		^{\prime }\Vert \eta _{x}(s)\Vert ^{2}ds.  \label{21}
	\end{align}%
	Next, Young's inequality and (\ref{F}) yield 
	\begin{align}
		-\frac{2\beta }{g(0)}\int_{0}^{+\infty }\kappa (s)\left\langle \phi _{t},\eta
		_{x}(s)\right\rangle ds&=-\frac{2\beta }{g(0)}\left\langle \phi
		_{t},\int_{0}^{+\infty }\kappa (s)\eta _{x}(s)ds\right\rangle\notag\\ 
		&\leq
		\varepsilon _{1}\left\Vert \phi _{t}\right\Vert ^{2}-\frac{M}{\varepsilon
			_{1}}\int_{0}^{+\infty }\kappa ^{\prime }(s)\left\Vert \eta
		_{x}(s)\right\Vert ^{2}ds.  \label{22}
	\end{align}%
	Cauchy-Schwarz' inequality and (\ref{F}) give%
	\begin{eqnarray}
		\frac{2}{g(0)}\left\Vert \displaystyle\int_{0}^{+\infty }\kappa (s)\eta
		_{x}(s)ds\right\Vert ^{2} &\leq\displaystyle 2\left\Vert \eta \right\Vert _{\mathcal{V}%
		}^{2}\leq -\left\Vert \eta \right\Vert _{\mathcal{V}}^{2}-\frac{3}{\delta }%
		\int_{0}^{+\infty }\kappa ^{\prime }(s)\left\Vert \eta _{x}(s)\right\Vert
		^{2}ds,  \notag \\
		&\leq -\left\Vert \eta \right\Vert _{\mathcal{V}}^{2}-M\displaystyle\int_{0}^{+\infty
		}\kappa ^{\prime }(s)\left\Vert \eta _{x}(s)\right\Vert ^{2}ds.  \label{23}
	\end{eqnarray}%
	Substituting (\ref{21})-(\ref{23}) into (\ref{20}), (\ref{25}) follows
	immediately. 
\end{proof}

\begin{lemma}
	Let $(u,\phi ,\theta ,\eta )$ be the solution of (\ref{1}), the functional 
	\begin{equation*}
		F_{2}(t)=-\frac{cJ}{\beta }\left\langle \theta ,\int_{0}^{x}\phi
		_{t}(y)dy\right\rangle
	\end{equation*}%
	satisfies for any \ $\varepsilon _{2}>0,$ the estimate%
	\begin{align}
		\frac{d}{dt}F_{2}(t)& =-\frac{J}{2}\left\Vert \phi _{t}\right\Vert
		^{2}-M\int_{0}^{+\infty }\kappa ^{\prime }(s)\left\Vert \eta
		_{x}(s)\right\Vert ^{2}ds+M\left( 1+\frac{1}{\varepsilon _{2}}\right)
		\left\Vert \theta \right\Vert ^{2}  \notag \\
		& +\varepsilon _{2}\left\Vert \sqrt{a}u_{x}+\frac{b}{\sqrt{a}}\phi
		\right\Vert ^{2}+\varepsilon _{2}\left\Vert \phi \right\Vert
		^{2}+\varepsilon _{2}\left\Vert \phi _{x}\right\Vert ^{2},  \label{30}
	\end{align}%
	where, $M$ is a positive constant independent of $\varepsilon _{2}.$
\end{lemma}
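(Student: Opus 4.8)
The plan is to differentiate $F_{2}$ in time, eliminate the time derivatives $\theta _{t}$ and $\phi _{tt}$ by invoking the field equations of (\ref{1}), integrate by parts in $x$, and then absorb the resulting cross terms by Young's inequality. Writing $w(x,t)=\int_{0}^{x}\phi _{t}(y,t)\,dy$, the product rule gives
\[
\frac{d}{dt}F_{2}=-\frac{cJ}{\beta }\langle \theta _{t},w\rangle -\frac{cJ}{\beta }\Big\langle \theta ,\int_{0}^{x}\phi _{tt}(y)\,dy\Big\rangle .
\]
Into the first term I substitute (\ref{1})$_{3}$ in the form $c\theta _{t}=-\beta \phi _{xt}+\int_{0}^{+\infty }\kappa (s)\eta _{xx}(s)\,ds$, and into the second I substitute (\ref{1})$_{2}$, namely $J\phi _{tt}=\alpha \phi _{xx}-bu_{x}-\xi \phi -\beta \theta _{x}$, so that $J\int_{0}^{x}\phi _{tt}\,dy=\alpha \phi _{x}(x)-bu(x)-\xi \int_{0}^{x}\phi \,dy-\beta \theta (x)$, where I used $\int_{0}^{x}\phi _{yy}\,dy=\phi _{x}(x)$, $\int_{0}^{x}u_{y}\,dy=u(x)$ and $\int_{0}^{x}\theta _{y}\,dy=\theta (x)$, which collapse thanks to $\phi _{x}(0)=0$ and $u(0)=\theta (0)=0$.

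The decisive simplification is that all boundary contributions vanish. Since $w(0)=0$ and, by the zero-mean normalization of $\overline{\phi }$ performed in the Introduction, $w(\pi )=\int_{0}^{\pi }\phi _{t}\,dy=0$, every integration by parts in $x$ loses its endpoint terms. Carrying this out, the leading term $J\langle \phi _{xt},w\rangle $ produces exactly $-J\Vert \phi _{t}\Vert ^{2}$; the memory term becomes $\frac{J}{\beta }\big\langle \int_{0}^{+\infty }\kappa (s)\eta _{x}(s)\,ds,\phi _{t}\big\rangle $; and the second bracket yields $c\Vert \theta \Vert ^{2}$ together with the cross terms $-\frac{c\alpha }{\beta }\langle \theta ,\phi _{x}\rangle $, $\frac{cb}{\beta }\langle \theta ,u\rangle $ and $\frac{c\xi }{\beta }\big\langle \theta ,\int_{0}^{x}\phi \,dy\big\rangle $.

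It then remains to estimate the lower-order terms. By Young's inequality the memory term is bounded by $\tfrac{J}{2}\Vert \phi _{t}\Vert ^{2}+C\big\Vert \int_{0}^{+\infty }\kappa (s)\eta _{x}(s)\,ds\big\Vert ^{2}$, and Cauchy-Schwarz with $\int_{0}^{+\infty }\kappa \,ds=g(0)$ gives $\big\Vert \int_{0}^{+\infty }\kappa \eta _{x}\,ds\big\Vert ^{2}\leq g(0)\Vert \eta \Vert _{\mathcal{V}}^{2}$, which by (\ref{F}) is controlled by $-M\int_{0}^{+\infty }\kappa ^{\prime }(s)\Vert \eta _{x}(s)\Vert ^{2}ds$; combined with the $-J\Vert \phi _{t}\Vert ^{2}$ already present this leaves $-\tfrac{J}{2}\Vert \phi _{t}\Vert ^{2}$. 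The cross terms are split by Young's inequality with parameter $\varepsilon _{2}$, throwing each $\theta $-factor into $M(1+\tfrac{1}{\varepsilon _{2}})\Vert \theta \Vert ^{2}$ (which also absorbs the $c\Vert \theta \Vert ^{2}$) and keeping $\varepsilon _{2}\Vert \phi _{x}\Vert ^{2}$ and $\varepsilon _{2}\Vert \phi \Vert ^{2}$. For the term $\frac{cb}{\beta }\langle \theta ,u\rangle $, which carries an undifferentiated $u$, I use Poincar\'{e}'s inequality $\Vert u\Vert \leq C\Vert u_{x}\Vert $ together with the identity $a\Vert u_{x}\Vert ^{2}\leq 2\Vert \sqrt{a}u_{x}+\tfrac{b}{\sqrt{a}}\phi \Vert ^{2}+\tfrac{2b^{2}}{a}\Vert \phi \Vert ^{2}$ stemming from (\ref{6}), so that it is dominated (after relabelling $\varepsilon _{2}$) by $\varepsilon _{2}\Vert \sqrt{a}u_{x}+\tfrac{b}{\sqrt{a}}\phi \Vert ^{2}+\varepsilon _{2}\Vert \phi \Vert ^{2}$; for $\langle \theta ,\int_{0}^{x}\phi \,dy\rangle $ I first bound $\Vert \int_{0}^{x}\phi \,dy\Vert \leq C\Vert \phi \Vert $. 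Collecting all contributions gives (\ref{30}).

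The main obstacle is the bookkeeping of the boundary terms: the whole computation hinges on $w(\pi )=0$, which is exactly why the zero-mean reduction to $\overline{\phi }$ was carried out, and on rewriting $\int_{0}^{x}\phi _{tt}\,dy$ through (\ref{1})$_{2}$ so that the remaining endpoint values collapse under the Dirichlet/Neumann conditions. The only analytic input beyond this is the control of the memory convolution $\int_{0}^{+\infty }\kappa \eta _{x}\,ds$ by the dissipation $\int_{0}^{+\infty }\kappa ^{\prime }\Vert \eta _{x}\Vert ^{2}$ through (\ref{F}).
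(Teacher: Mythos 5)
Your proof is correct and follows essentially the same route as the paper: differentiate $F_2$, substitute \eqref{1}$_2$ and \eqref{1}$_3$, integrate by parts using the zero-mean normalization of $\phi$ (so the boundary term $w(\pi)=\int_0^\pi\phi_t\,dy$ vanishes), and close with Young's inequality, Poincar\'e, and \eqref{F}. The only cosmetic difference is that the paper splits $bu_x+\xi\phi$ as $\tfrac{b}{\sqrt{\mu}}\bigl(\sqrt{\mu}u_x+\tfrac{b}{\sqrt{\mu}}\phi\bigr)+\bigl(\xi-\tfrac{b^2}{\mu}\bigr)\phi$ before estimating, whereas you integrate to $bu(x)$ and recover the term $\varepsilon_2\bigl\Vert\sqrt{\mu}u_x+\tfrac{b}{\sqrt{\mu}}\phi\bigr\Vert^2$ via Poincar\'e and the triangle inequality; both yield the same bound.
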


\begin{proof}%
	Differentiating $F_{2}(t)$, then using (\ref{1})$_{2}$, (\ref{1})$_{3}$ and
	integration by parts, we get 
	\begin{align*}
		\frac{d}{dt}F_{2}(t) =&-\frac{cJ}{\beta }\left\langle \theta
		_{t},\int_{0}^{x}\phi _{t}(y)dy\right\rangle -\frac{cJ}{\beta }\left\langle
		\theta ,\int_{0}^{x}\phi _{tt}(y)dy\right\rangle  \\
		=&-\frac{J}{\beta }\left\langle -\beta \phi _{xt}+\int_{0}^{+\infty }\kappa
		(s)\eta _{xx}^{t}(s)ds,\int_{0}^{x}\phi _{t}(y)dy\right\rangle\notag\\
		& -\frac{c}{%
			\beta }\left\langle \theta ,\int_{0}^{x}\left( \alpha \phi _{xx}-bu_{x}-\xi
		\phi -\beta \theta _{x}\right) dy\right\rangle  \\
		=&-J\left\Vert \phi _{t}\right\Vert ^{2}+\frac{J}{\beta }\int_{0}^{+\infty
		}\kappa (s)\left\langle \eta _{x}^{t}(s),\phi _{t}\right\rangle ds-\frac{%
			c\alpha }{\beta }\left\langle \theta ,\phi _{x}\right\rangle\notag\\
		& +c\left\Vert
		\theta \right\Vert ^{2}+\frac{c}{\beta }\left\langle \theta
		,\int_{0}^{x}\left( bu_{x}+\xi \phi \right) dy\right\rangle .
	\end{align*}%
	Notice that 
	\begin{align*}
		\frac{c}{\beta }\left\langle \theta ,\int_{0}^{x}\left( bu_{x}+\xi \phi
		\right) \left( y\right) dy\right\rangle & =\frac{cb}{\beta \sqrt{\mu }}%
		\left\langle \theta ,\int_{0}^{x}\left( \sqrt{\mu }u_{x}+\frac{b}{\sqrt{\mu }%
		}\phi \right) \left( y\right) dy\right\rangle  \\
		& +\frac{c}{\beta }\left( \xi -\frac{b^{2}}{\mu }\right) \left\langle \theta
		,\int_{0}^{x}\phi (y)dy\right\rangle ,
	\end{align*}%
	we infer that%
	\begin{align}
		\frac{d}{dt}F_{2}(t)& =-J\left\Vert \phi _{t}\right\Vert ^{2}+\frac{J}{\beta 
		}\int_{0}^{+\infty }\kappa (s)\left\langle \eta _{x}^{t}(s),\phi
		_{t}\right\rangle ds   -\frac{c\alpha }{\beta }\left\langle
		\theta ,\phi _{x}\right\rangle +c\left\Vert \theta \right\Vert ^{2}\label{24} \notag
		\\
		& +\frac{c}{\beta }\left( \xi -\frac{b^{2}}{\mu }\right) \left\langle \theta
		,\int_{0}^{x}\phi (y)dy\right\rangle +\frac{cb}{\beta \sqrt{\mu }}\left\langle \theta
		,\int_{0}^{x}\left( \sqrt{\mu }u_{x}+\frac{b}{\sqrt{\mu }}\phi \right)
		\left( y\right) dy\right\rangle.  	\end{align}%
	Young's and Cauchy Schwarz inequalities and (\ref{F}), give%
	\begin{align}
		\frac{J}{\beta }\int_{0}^{+\infty }\kappa (s)\left\langle \eta _{x}(s),\phi
		_{t}\right\rangle ds& \leq \frac{J}{\beta }\left\Vert \phi _{t}\right\Vert
		\int_{0}^{+\infty }\kappa (s)\left\Vert \eta _{x}\right\Vert ds  \notag \\
		& \leq \frac{J}{2}\left\Vert \phi _{t}\right\Vert ^{2}+M\left\Vert \eta
		\right\Vert _{\mathcal{V}}^{2}  \notag \\
		& \leq \frac{J}{2}\left\Vert \phi _{t}\right\Vert ^{2}-M\int_{0}^{+\infty
		}\kappa ^{\prime }\left( s\right) \Vert \eta _{x}(s)\Vert ^{2}ds.  \label{26}
	\end{align}%
	Similarly, for any $\varepsilon _{2}>0$ we have%
	\begin{equation}
		\frac{cb}{\beta \sqrt{\mu }}\left\langle \theta ,\int_{0}^{x}\left( \sqrt{%
			\mu }u_{x}+\frac{b}{\sqrt{\mu }}\phi \right) \left( y\right) dy\right\rangle
		\leq \frac{M}{\varepsilon _{2}}\left\Vert \theta \right\Vert
		^{2}+\varepsilon _{2}\left\Vert \sqrt{\mu }u_{x}+\frac{b}{\sqrt{\mu }}\phi
		\right\Vert ^{2},  \label{27}
	\end{equation}
	\begin{equation}
		\frac{c}{\beta }\left( \xi -\frac{b^{2}}{a}\right) \left\langle \theta
		,\int_{0}^{x}\phi (y)dy\right\rangle \leq \frac{M}{\varepsilon _{2}}%
		\left\Vert \theta \right\Vert ^{2}+\varepsilon _{2}\left\Vert \phi
		\right\Vert ^{2}  \label{28}
	\end{equation}%
	and 
	\begin{equation}
		c\alpha \left\langle \theta ,\phi _{x}\right\rangle \leq \frac{M}{%
			\varepsilon _{2}}\left\Vert \theta \right\Vert ^{2}+\varepsilon
		_{2}\left\Vert \phi _{x}\right\Vert ^{2}.  \label{29}
	\end{equation}%
	The  substitution of \ (\ref{26})-(\ref{29}) into (\ref{24}), yields (\ref%
	{30}). \end{proof}

\begin{lemma}
	Let%
	\begin{equation*}
		I_{1}=\frac{\alpha \rho }{b}\left\langle \phi _{x},u_{t}\right\rangle +\frac{%
			J\mu }{b}\left\langle \phi _{t},u_{x}\right\rangle +2J\left\langle \phi
		,\phi _{t}\right\rangle -\dfrac{\rho \xi }{b}\left\langle
		u_{t},\int_{0}^{x}\phi (y)dy\right\rangle ,
	\end{equation*}%
	\begin{equation*}
		I_{2}(t)=-\frac{1}{\sqrt{\mu }}\int_{0}^{+\infty }\kappa (s)\left\langle \eta
		_{x}(s),\sqrt{\mu }u_{x}+\frac{b}{\sqrt{\mu }}\phi \right\rangle ds,
	\end{equation*}%
	\begin{equation*}
		I_{3}\left( t\right) =-\frac{\beta \rho }{b}\left\langle \theta
		,u_{t}\right\rangle ,
	\end{equation*}%
	and 
	\begin{equation}
		F_{3}\left( t\right) =I_{1}(t)+\frac{\rho \beta \mu }{b\gamma _{g}}I_{2}(t)+%
		\frac{\mu c}{\gamma _{g}}I_{3}\left( t\right) .  \label{32a}
	\end{equation}
	
	Suppose that $\chi_g=0$, then for all $\varepsilon>0$, the functional $F_3$
	satisfies,
	
	\begin{align}
		\frac{d}{dt}F_{3}\left( t\right) \leq & -\frac{1}{2}\left\Vert \sqrt{\mu }%
		u_{x}+\frac{b}{\sqrt{\mu }}\phi \right\Vert ^{2}-\left( \xi -\frac{b^{2}}{a}%
		\right) \left\Vert \phi \right\Vert ^{2}-\frac{\alpha }{2}\left\Vert \phi
		_{x}\right\Vert ^{2} +M\left( 1+\dfrac{1}{\varepsilon _{3}}\right) \left\Vert \phi
		_{t}\right\Vert ^{2} \notag \\
		& +\varepsilon _{3}\left\Vert u_{t}\right\Vert
		^{2}+M\left\Vert \theta \right\Vert ^{2}-M\left( 1+\varepsilon _{3}\right)
		\int_{0}^{+\infty }\kappa ^{\prime }(s)\Vert \eta _{x}\Vert ^{2}ds.
		\label{35}
	\end{align}
	
	such that $\chi =\frac{\rho }{\mu }-\frac{J}{\alpha }+\frac{\rho \beta ^{2}}{%
		\alpha \gamma _{g}}$ \ , $\gamma _{g}=c\mu -g\left( 0\right) \rho \neq 0$
	and \ 
\end{lemma}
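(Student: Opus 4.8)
The plan is to differentiate $F_{3}=I_{1}+\frac{\rho\beta\mu}{b\gamma_{g}}I_{2}+\frac{\mu c}{\gamma_{g}}I_{3}$ term by term along the solution of \eqref{1}, repeatedly inserting the evolution equations and integrating by parts, and then to verify that the precise weights $\frac{\rho\beta\mu}{b\gamma_{g}}$ and $\frac{\mu c}{\gamma_{g}}$, together with the hypothesis $\chi_{g}=0$, annihilate every product that cannot be dominated by the dissipation.

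First I would compute $\frac{d}{dt}I_{1}$. Substituting $\rho u_{tt}=\mu u_{xx}+b\phi_{x}$ and $J\phi_{tt}=\alpha\phi_{xx}-bu_{x}-\xi\phi-\beta\theta_{x}$, integrating by parts (using $u,\theta\in H_{0}^{1}$ and the Neumann condition on $\phi$) and invoking $\int_{0}^{\pi}\phi\,dx=0$ to kill the boundary contributions of $\int_{0}^{x}\phi(y)\,dy$, the second-order terms $\langle\phi_{xx},u_{x}\rangle$ cancel between the first two pieces of $I_{1}$, and by the identity \eqref{6} the quadratic block collapses to $-\|\sqrt{\mu}u_{x}+\frac{b}{\sqrt{\mu}}\phi\|^{2}-(\xi-\frac{b^{2}}{\mu})\|\phi\|^{2}-\alpha\|\phi_{x}\|^{2}$, which is exactly the negative part sought in \eqref{35}. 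Besides a harmless $2J\|\phi_{t}\|^{2}$ and the term $-\frac{\rho\xi}{b}\langle u_{t},\int_{0}^{x}\phi_{t}\rangle$, what survives are three \emph{structurally dangerous} products that the dissipation cannot reach: the velocity cross term $\frac{J\mu-\alpha\rho}{b}\langle\phi_{t},u_{xt}\rangle$, the heat-flux coupling $-\frac{\mu\beta}{b}\langle\theta_{x},u_{x}\rangle$, and a residual $\theta$--$\phi$ coupling.

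Next I would differentiate $I_{3}=-\frac{\beta\rho}{b}\langle\theta,u_{t}\rangle$ via $c\theta_{t}=-\beta\phi_{xt}+\int_{0}^{\infty}\kappa\,\eta_{xx}\,ds$, and $I_{2}$ via $\eta_{t}=\theta-\eta_{s}$, using $\int_{0}^{\infty}\kappa\,ds=g(0)$ (hypothesis (h3)) and an integration by parts in $s$ whose boundary terms vanish since $\eta_{x}(0)=0$ and $\kappa(\infty)=0$. The decisive bookkeeping, after multiplying by the prescribed weights, is the following. The three contributions to $\langle\theta_{x},u_{x}\rangle$ carry coefficients $-\frac{\mu\beta}{b}$, $\frac{\mu^{2}c\beta}{b\gamma_{g}}$ and $-\frac{\rho\beta\mu g(0)}{b\gamma_{g}}$, whose sum $\frac{\mu\beta}{b\gamma_{g}}\bigl(\mu c-\rho g(0)-\gamma_{g}\bigr)$ is zero by the very definition $\gamma_{g}=c\mu-\rho g(0)$; the two memory--velocity products $\langle\int_{0}^{\infty}\kappa\,\eta_{x}\,ds,\,u_{xt}\rangle$ produced by $I_{2}$ and $I_{3}$ carry opposite coefficients $\pm\frac{\rho\beta\mu}{b\gamma_{g}}$ and cancel identically; and, most importantly, the coefficient of $\langle\phi_{t},u_{xt}\rangle$ becomes $\frac{J\mu-\alpha\rho}{b}-\frac{\mu\beta^{2}\rho}{b\gamma_{g}}=-\frac{\mu\alpha}{b}\chi_{g}$, which vanishes precisely under the standing assumption $\chi_{g}=0$. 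I expect this triple cancellation --- confirming that these non-dissipative products disappear for exactly these weights --- to be the heart of the argument and the main obstacle, since it is what forces the definitions of $\gamma_{g}$ and $\chi_{g}$.

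Finally, what remains are genuinely lower-order terms: $-\frac{\rho\xi}{b}\langle u_{t},\int_{0}^{x}\phi_{t}\rangle$, the net coupling $\beta\langle\theta,\phi_{x}\rangle$ left after combination, and the memory pieces of $I_{2}$, namely $\int_{0}^{\infty}\kappa'\langle\eta_{x},\sqrt{\mu}u_{x}+\frac{b}{\sqrt{\mu}}\phi\rangle\,ds$ and $\langle\int_{0}^{\infty}\kappa\,\eta_{x}\,ds,\,\phi_{t}\rangle$. These I would control by the Cauchy--Schwarz, Young and Poincar\'e inequalities together with the estimate \eqref{F}: the $\int_{0}^{x}\phi_{t}$ term yields $\varepsilon_{3}\|u_{t}\|^{2}+\frac{M}{\varepsilon_{3}}\|\phi_{t}\|^{2}$; the $\theta$--$\phi$ coupling is absorbed using half of $-\alpha\|\phi_{x}\|^{2}$, leaving $-\frac{\alpha}{2}\|\phi_{x}\|^{2}+M\|\theta\|^{2}$; and the memory terms are bounded by $\varepsilon\|\sqrt{\mu}u_{x}+\frac{b}{\sqrt{\mu}}\phi\|^{2}+M\|\phi_{t}\|^{2}-M(1+\varepsilon_{3})\int_{0}^{\infty}\kappa'\|\eta_{x}\|^{2}\,ds$, the first piece being absorbed into half of the leading $-\|\sqrt{\mu}u_{x}+\frac{b}{\sqrt{\mu}}\phi\|^{2}$. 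Collecting the surviving terms then gives exactly \eqref{35}.
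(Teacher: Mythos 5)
Your proposal is correct and follows essentially the same route as the paper: differentiate $I_1$, $I_2$, $I_3$ along the system, observe that the chosen weights force the cancellation of the $\langle\theta_x,u_x\rangle$ terms (via the definition of $\gamma_g$), of the $\langle\theta_t,u_t\rangle$/memory–velocity pair, and of the $\langle\phi_t,u_{tx}\rangle$ term whose coefficient is exactly $-\frac{\mu\alpha}{b}\chi_g$, and then estimate the remaining lower-order terms with Cauchy--Schwarz, Young and \eqref{F}. The only difference is cosmetic bookkeeping (you substitute the heat equation in $I_3$ rather than in $I_2$, so the cancelling pair appears as memory–velocity products instead of $\langle\theta_t,u_t\rangle$ terms), which yields the identical identity \eqref{32b} and the identical final estimate.
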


\begin{proof}
	Direct differentiation, using (\ref{1})$_{1}$ and integration by parts,
	yields 
	\begin{align}
		\dfrac{d}{dt}\frac{\alpha \rho }{b}\left\langle \phi _{x},u_{t}\right\rangle
		& =\frac{\alpha \rho }{b}\left\langle \phi _{tx},u_{t}\right\rangle +\frac{%
			\alpha \rho }{b}\left\langle \phi _{x},u_{tt}\right\rangle  \notag \\
		& =\frac{\alpha \rho }{b}\left\langle \phi _{tx},u_{t}\right\rangle +\frac{%
			\alpha }{b}\left\langle \phi _{x},\mu u_{xx}+b\phi _{x}\right\rangle  \notag
		\\
		& =\frac{\alpha \rho }{b}\left\langle \phi _{tx},u_{t}\right\rangle +\frac{%
			\mu \alpha }{b}\left\langle \phi _{x},u_{xx}\right\rangle +\alpha \left\Vert
		\phi _{x}\right\Vert ^{2},  \label{25a}
	\end{align}%
	Similarly, using (\ref{1})$_{2}$ we have%
	\begin{align}
		\dfrac{d}{dt}\frac{J\mu }{b}\left\langle u_{x},\phi _{t}\right\rangle & =%
		\frac{J\mu }{b}\left\langle u_{tx},\phi _{t}\right\rangle +\frac{\mu }{b}%
		\left\langle u_{x},J\phi _{tt}\right\rangle  \notag \\
		& =\frac{\mu }{b}\left\langle u_{x},\alpha \phi _{xx}-bu_{x}-\xi \phi -\beta
		\theta _{x}\right\rangle +\frac{J\mu }{b}\left\langle u_{tx},\phi
		_{t}\right\rangle  \notag \\
		& =-\frac{\mu \alpha }{b}\left\langle u_{x},\phi _{xx}\right\rangle -\mu
		\left\Vert u_{x}\right\Vert ^{2}-\frac{\mu \xi }{b}\left\langle u_{x},\phi
		\right\rangle -\frac{\mu \beta }{b}\left\langle u_{x},\theta
		_{x}\right\rangle +\frac{J\mu }{b}\left\langle u_{tx},\phi _{t}\right\rangle
		\label{26a}
	\end{align}%
	and%
	\begin{align}
		2\dfrac{d}{dt}J\left\langle \phi ,\phi _{t}\right\rangle & =2J\left\Vert
		\phi _{t}\right\Vert ^{2}+2J\left\langle \phi ,\phi _{tt}\right\rangle 
		\notag \\
		& =2J\left\Vert \phi _{t}\right\Vert ^{2}+2\left\langle \phi ,\alpha \phi
		_{xx}-bu_{x}-\xi \phi -\beta \theta _{x}\right\rangle  \notag \\
		& =2J\left\Vert \phi _{t}\right\Vert ^{2}-2\alpha \left\Vert \phi
		_{x}\right\Vert ^{2}-2\xi \left\Vert \phi \right\Vert ^{2}-2b\left\langle
		\phi ,u_{x}\right\rangle -2\beta \left\langle \phi ,\theta _{x}\right\rangle
		.  \label{27a}
	\end{align}%
	By exploiting (\ref{1})$_{1}$ we infer that
	\begin{align}
		-\dfrac{d}{dt}\dfrac{\rho \xi }{b}\left\langle u_{t},\int_{0}^{x}\phi
		(y)dy\right\rangle & =-\dfrac{\rho \xi }{b}\left\langle
		u_{tt},\int_{0}^{x}\phi (y)dy\right\rangle -\dfrac{\rho \xi }{b}\left\langle
		u_{t},\int_{0}^{x}\phi _{t}(y)dy\right\rangle  \notag \\
		& =-\dfrac{\xi }{b}\left\langle \mu u_{xx}+b\phi _{x},\int_{0}^{x}\phi
		(y)dy\right\rangle -\dfrac{\rho \xi }{b}\left\langle u_{t},\int_{0}^{x}\phi
		_{t}(y)dy\right\rangle  \notag \\
		& =\dfrac{\mu \xi }{b}\left\langle u_{x},\phi \right\rangle +\xi \left\Vert
		\phi \right\Vert ^{2}-\dfrac{\rho \xi }{b}\left\langle
		u_{t},\int_{0}^{x}\phi _{t}(y)dy\right\rangle .  \label{28a}
	\end{align}%
	The addition of (\ref{25a})-(\ref{28a}) yields 
	\begin{align*}
		\frac{d}{dt}I_{1}\left( t\right) & =-\mu \left\Vert u_{x}\right\Vert
		^{2}-\xi \left\Vert \phi \right\Vert ^{2}-2b\left\langle \phi
		,u_{x}\right\rangle -\alpha \left\Vert \phi _{x}\right\Vert
		^{2}+2J\left\Vert \phi _{t}\right\Vert ^{2} \\
		& -2\beta \left\langle \phi ,\theta _{x}\right\rangle -\frac{\mu \beta }{b}%
		\left\langle \theta _{x},u_{x}\right\rangle -\dfrac{\rho \xi }{b}%
		\left\langle u_{t},\int_{0}^{x}\phi _{t}(y)dy\right\rangle +\left( \frac{%
			J\mu }{b}-\frac{\alpha \rho }{b}\right) \left\langle u_{tx},\phi
		_{t}\right\rangle .
	\end{align*}%
	Therefore, (\ref{6}) leads to%
	\begin{align}
		\frac{d}{dt}I_{1}\left( t\right) & =-\left\Vert \sqrt{\mu }u_{x}+\frac{b}{%
			\sqrt{\mu }}\phi \right\Vert ^{2}-\left( \xi -\frac{b^{2}}{\mu }\right)
		\left\Vert \phi \right\Vert ^{2}-\alpha \left\Vert \phi _{x}\right\Vert
		^{2}+2J\left\Vert \phi _{t}\right\Vert ^{2}  \notag \\
		& -2\beta \left\langle \phi ,\theta _{x}\right\rangle -\frac{\mu \beta }{b}%
		\left\langle \theta _{x},u_{x}\right\rangle -\dfrac{\rho \xi }{b}%
		\left\langle u_{t},\int_{0}^{x}\phi _{t}(y)dy\right\rangle +\left( \frac{Ja}{%
			b}-\frac{\alpha \rho }{b}\right) \left\langle u_{tx},\phi _{t}\right\rangle .
		\label{29a}
	\end{align}%
	The differentiation of $I_{2}\left( t\right) $ and integration by parts
	yield 
	\begin{align*}
		\frac{d}{dt}I_{2}(t) =&-\frac{1}{\sqrt{\mu }}\int_{0}^{+\infty }\kappa
		(s)\left\langle \eta _{tx}(s),\sqrt{\mu }u_{x}+\frac{b}{\sqrt{\mu }}\phi
		\right\rangle ds\\
		&-\displaystyle\frac{1}{\sqrt{\mu }}\int_{0}^{+\infty }\kappa
		(s)\left\langle \eta _{x}(s),\sqrt{\mu }u_{tx}+\frac{b}{\sqrt{\mu }}\phi
		_{t}\right\rangle ds ,\\
		=&\frac{1}{\sqrt{\mu }}\int_{0}^{+\infty }\kappa (s)\left\langle \eta
		_{t}(s),\sqrt{\mu }u_{xx}+\frac{b}{\sqrt{\mu }}\phi _{x}\right\rangle ds \\
		& +\frac{1}{\sqrt{\mu }}\int_{0}^{+\infty }\kappa (s)\left\langle \eta
		_{xx}(s),\sqrt{\mu }u_{t}\right\rangle ds-\frac{1}{\sqrt{\mu }}%
		\int_{0}^{+\infty }\kappa (s)\left\langle \eta _{x}(s),\frac{b}{\sqrt{\mu }}%
		\phi _{t}\right\rangle ds
	\end{align*}%
	Now, using (\ref{1})$_{3}$, (\ref{1})$_{4}$ and integration by parts, we get%
	\begin{align*}
		\frac{d}{dt}I_{2}\left( t\right) =&\frac{g\left( 0\right) }{\sqrt{\mu }}%
		\left\langle \theta ,\sqrt{\mu }u_{xx}+\frac{b}{\sqrt{\mu }}\phi
		_{x}\right\rangle -\frac{1}{\sqrt{\mu }}\int_{0}^{\infty }\kappa
		(s)\left\langle \eta _{s},\sqrt{\mu }u_{xx}+\frac{b}{\sqrt{\mu }}\phi
		_{x}\right\rangle ds \\
		&+\left\langle c\theta _{t}+\beta \phi _{xt},u_{t}\right\rangle -\frac{b}{%
			\mu }\int_{0}^{+\infty }\kappa (s)\left\langle \eta _{x}(s),\phi
		_{t}\right\rangle ds.
	\end{align*}%
	\begin{align*}
		=&-g(0)\left\langle \theta _{x},u_{x}\right\rangle +\frac{b}{\mu }%
		g(0)\left\langle \theta ,\phi _{x}\right\rangle -\frac{1}{\sqrt{\mu }}%
		\int_{0}^{+\infty }\kappa (s)\left\langle \eta _{s},\sqrt{\mu }u_{xx}+\frac{b%
		}{\sqrt{\mu }}\phi _{x}\right\rangle ds \\
		&+c\left\langle \theta _{t},u_{t}\right\rangle ds-\beta \left\langle \phi
		_{t},u_{xt}\right\rangle -\frac{b}{\mu }\int_{0}^{+\infty }\kappa
		(s)\left\langle \eta _{x}(s),\phi _{t}\right\rangle ds.
	\end{align*}%
	The integration by parts gives%
	\begin{align*}
		-\frac{1}{\sqrt{\mu }}\int_{0}^{+\infty }\kappa (s)\left\langle \eta _{s},%
		\sqrt{\mu }u_{xx}+\frac{b}{\sqrt{\mu }}\phi _{x}\right\rangle ds =\frac{1}{%
			\sqrt{\mu }}\int_{0}^{+\infty }\kappa ^{\prime }(s)\left\langle \eta ,\sqrt{%
			\mu }u_{xx}+\frac{b}{\sqrt{\mu }}\phi _{x}\right\rangle ds
	\end{align*}
\begin{align*}
		 =-\frac{1}{\sqrt{\mu }}\int_{0}^{+\infty }\kappa ^{\prime }(s)\left\langle
		\eta _{x},\sqrt{\mu }u_{x}+\frac{b}{\sqrt{\mu }}\phi \right\rangle ds.
	\end{align*}%
	Thus, 
	\begin{align}
		\frac{d}{dt}I_{2}(t)& =-g(0)\left\langle \theta _{x},u_{x}\right\rangle +%
		\frac{b}{\mu }g(0)\left\langle \theta ,\phi _{x}\right\rangle -\frac{1}{%
			\sqrt{\mu }}\int_{0}^{+\infty }\kappa ^{\prime }(s)\left\langle \eta _{x},%
		\sqrt{\mu }u_{x}+\frac{b}{\sqrt{\mu }}\phi \right\rangle ds  \notag \\
		& +c\left\langle \theta _{t},u_{t}\right\rangle +\beta \left\langle \phi
		_{xt},u_{t}\right\rangle -\frac{b}{\mu }\int_{0}^{+\infty }\kappa
		(s)\left\langle \eta _{x}(s),\phi _{t}\right\rangle ds.  \label{30a}
	\end{align}%
	Finally, using (\ref{1})$_{1}$ we get 
	\begin{align}
		\frac{d}{dt}I_{3}\left( t\right) & =-\frac{\beta \rho }{b}\left\langle
		\theta _{t},u_{t}\right\rangle -\frac{\beta \rho }{b}\left\langle \theta
		,u_{tt}\right\rangle  \notag \\
		& =-\frac{\beta \rho }{b}\left\langle \theta _{t},u_{t}\right\rangle -\frac{%
			\beta }{b}\left\langle \theta ,\mu u_{xx}+b\phi _{x}\right\rangle  \notag \\
		& =-\frac{\beta \rho }{b}\left\langle \theta _{t},u_{t}\right\rangle +\frac{%
			\beta \mu }{b}\left\langle \theta _{x},u_{x}\right\rangle -\beta
		\left\langle \theta ,\phi _{x}\right\rangle .  \label{31}
	\end{align}%
	Plugging (\ref{29a})-(\ref{31}) into (\ref{32a}) we obtain%
	\begin{align*}
		\frac{d}{dt}F_{3}\left( t\right) & =-\left\Vert \sqrt{\mu }u_{x}+\frac{b}{%
			\sqrt{\mu }}\phi \right\Vert ^{2}-\left( \xi -\frac{b^{2}}{\mu }\right)
		\left\Vert \phi \right\Vert ^{2}-\alpha \left\Vert \phi _{x}\right\Vert
		^{2}+2J\left\Vert \phi _{t}\right\Vert ^{2} \\
		& -2\beta \left\langle \phi ,\theta _{x}\right\rangle -\frac{\mu \beta }{b}%
		\left\langle \theta _{x},u_{x}\right\rangle -\dfrac{\rho \xi }{b}%
		\left\langle u_{t},\int_{0}^{x}\phi _{t}(y)dy\right\rangle +\left( \frac{%
			J\mu }{b}-\frac{\alpha \rho }{b}\right) \left\langle u_{tx},\phi
		_{t}\right\rangle\\
		& -\frac{\rho \beta \mu g(0)}{b\gamma _{g}}\left\langle \theta
		_{x},u_{x}\right\rangle -\frac{\rho \beta \sqrt{\mu }}{b\gamma _{g}}%
		\int_{0}^{+\infty }\kappa ^{\prime }(s)\left\langle \eta _{x},\sqrt{\mu }%
		u_{x}+\frac{b}{\sqrt{\mu }}\phi \right\rangle ds \\
		& +\frac{\rho c\beta \mu }{b\gamma _{g}}\left\langle \theta
		_{t},u_{t}\right\rangle +\frac{\rho \beta ^{2}\mu }{b\gamma _{g}}%
		\left\langle \phi _{xt},u_{t}\right\rangle -\frac{\rho \beta }{\gamma _{g}}%
		\int_{0}^{+\infty }\kappa (s)\left\langle \eta _{x}(s),\phi _{t}\right\rangle
		ds.\\
		&+\frac{\rho \beta g(0)}{\gamma _{g}}\left\langle
		\theta ,\phi _{x}\right\rangle -\frac{\mu c\beta \rho }{b\gamma _{g}}\left\langle \theta
		_{t},u_{t}\right\rangle +\frac{\beta \mu ^{2}c}{b\gamma _{g}}\left\langle
		\theta _{x},u_{x}\right\rangle -\frac{\mu c\beta }{\gamma _{g}}\left\langle
		\theta ,\phi _{x}\right\rangle ,
	\end{align*}%
	then,%
	\begin{align*}
		\frac{d}{dt}F_{3}\left( t\right) =&-\left\Vert \sqrt{\mu }u_{x}+\frac{b}{%
			\sqrt{\mu }}\phi \right\Vert ^{2}-\left( \xi -\frac{b^{2}}{\mu }\right)
		\left\Vert \phi \right\Vert ^{2}-\alpha \left\Vert \phi _{x}\right\Vert
		^{2}+2J\left\Vert \phi _{t}\right\Vert ^{2} \\
		& +\frac{\mu }{b}\left( \frac{\mu c\beta }{\gamma _{g}}-\frac{\rho \beta g(0)%
		}{\gamma _{g}}-\beta \right) \left\langle \theta _{x},u_{x}\right\rangle -%
		\dfrac{\rho \xi }{b}\left\langle u_{t},\int_{0}^{x}\phi
		_{t}(y)dy\right\rangle \\
		& +\left( \frac{\rho \beta g(0)}{\gamma _{g}}-\frac{\mu c\beta }{\gamma _{g}}%
		+2\beta \right) \left\langle \theta ,\phi _{x}\right\rangle -\frac{\mu
			\alpha }{b}\chi _{g}\left\langle \phi _{t},u_{tx}\right\rangle \\
		& -\frac{\rho \beta \sqrt{\mu }}{b\gamma _{g}}\int_{0}^{+\infty }\kappa
		^{\prime }(s)\left\langle \eta _{x},\sqrt{\mu }u_{x}+\frac{b}{\sqrt{\mu }}%
		\phi \right\rangle ds-\frac{\rho \beta }{\gamma _{g}}\int_{0}^{+\infty
		}\kappa (s)\left\langle \eta _{x}(s),\phi _{t}\right\rangle ds,
	\end{align*}%
	Taking into account that $\chi _{g}=0,$ we obtain%
	\begin{align}
		\frac{d}{dt}F_{3}\left( t\right) =&-\left\Vert \sqrt{\mu }u_{x}+\frac{b}{%
			\sqrt{\mu }}\phi \right\Vert ^{2}-\left( \xi -\frac{b^{2}}{\mu }\right)
		\left\Vert \phi \right\Vert ^{2}-\alpha \left\Vert \phi _{x}\right\Vert
		^{2}+2J\left\Vert \phi _{t}\right\Vert ^{2}  \notag \\
		& -\dfrac{\rho \xi }{b}\left\langle u_{t},\int_{0}^{x}\phi
		_{t}(y)dy\right\rangle -\frac{\rho \beta \sqrt{\mu }}{b\gamma _{g}}%
		\int_{0}^{+\infty }\kappa ^{\prime }(s)\left\langle \eta _{x},\sqrt{\mu }%
		u_{x}+\frac{b}{\sqrt{\mu }}\phi \right\rangle ds  \notag \\
		& +\beta \left\langle \theta ,\phi _{x}\right\rangle -\frac{\rho \beta }{%
			\gamma _{g}}\int_{0}^{+\infty }\kappa (s)\left\langle \eta _{x}(s),\phi
		_{t}\right\rangle ds.  \label{32b}
	\end{align}%
	Cauchy-Schwarz and Young's inequalities yield%
	\begin{equation*}  
		-\frac{\rho \beta \sqrt{\mu }}{b\gamma _{g}}\int_{0}^{+\infty }\kappa
		^{\prime }(s)\left\langle \eta _{x},\sqrt{\mu }u_{x}+\frac{b}{\sqrt{\mu }}%
		\phi \right\rangle ds
	\end{equation*}
\begin{equation}\label{32c}
	\leq \frac{1}{2}\left\Vert \sqrt{\mu }u_{x}+\frac{b}{%
			\sqrt{\mu }}\phi \right\Vert ^{2}-M\int_{0}^{+\infty }\kappa ^{^{\prime
		}}(s)\left\Vert \eta _{x}\right\Vert ^{2}ds.
	\end{equation}%
	Similarly, for any $\varepsilon>0$,%
	\begin{equation}  \label{32d}
		\dfrac{\rho \xi }{b}\left\langle u_{t},\int_{0}^{x}\phi
		_{t}(y)dy\right\rangle \leq \varepsilon \left\Vert u_{t}\right\Vert ^{2}+%
		\dfrac{M}{\varepsilon }\left\Vert \phi _{t}\right\Vert ^{2},
	\end{equation}%
	\begin{equation}  \label{32e}
		\beta \left\langle \theta ,\phi _{x}\right\rangle \leq \frac{\alpha }{2}%
		\left\Vert \phi _{x}\right\Vert ^{2}+M\left\Vert \theta \right\Vert ^{2}.
	\end{equation}%
	and by the use of (\ref{F}), for any $\varepsilon_{3}>0$ we have 
	\begin{align}
		\frac{\rho \beta }{\gamma _{g}}\int_{0}^{+\infty }\kappa (s)\left\langle \eta
		_{x}(s),\phi _{t}\right\rangle ds& \leq \frac{1}{\varepsilon _{3}}\left\Vert
		\phi _{t}\right\Vert ^{2}+\varepsilon _{3}M\left\Vert \eta \right\Vert _{%
			\mathcal{V}}^{2}  \notag \\
		& \leq \frac{1}{\varepsilon _{3}}\left\Vert \phi _{t}\right\Vert
		^{2}-\varepsilon _{3}M\int_{0}^{+\infty }\kappa ^{^{\prime }}(s)\left\Vert
		\eta _{x}\right\Vert ^{2}ds.  \label{32f}
	\end{align}%
	Therefore, (\ref{35}) follows from (\ref{32b})-(\ref{32f}). 
\end{proof}

\begin{lemma}
	For $(u,\phi ,\theta ,\eta )$ solution of (\ref{1}), the functional 
	\begin{equation*}
		F_{4}(t):=-\rho \left\langle u_{t},u\right\rangle -\dfrac{b\rho }{\mu }%
		\left\langle u_{t},\int_{0}^{x}\phi (y)dy\right\rangle
	\end{equation*}%
	satisfies 
	\begin{equation}
		\frac{d}{dt}F_{4}(t)\leq -\dfrac{\rho }{2}\left\Vert u_{t}\right\Vert
		^{2}+\left\Vert \sqrt{\mu }u_{x}+\dfrac{b}{\sqrt{\mu }}\phi \right\Vert
		^{2}+M\left\Vert \phi _{t}\right\Vert ^{2},  \label{40}
	\end{equation}%
	where $M$ is a positive constant.
\end{lemma}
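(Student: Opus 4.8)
The plan is to differentiate $F_4$ along the solution, eliminate the second-order time derivatives $u_{tt}$ by means of the first equation of \eqref{1}, and carry out the resulting integrations by parts in $x$. First I would write
\begin{equation*}
\frac{d}{dt}F_4(t)=-\rho\langle u_{tt},u\rangle-\rho\|u_t\|^2-\frac{b\rho}{\mu}\langle u_{tt},\textstyle\int_0^x\phi(y)dy\rangle-\frac{b\rho}{\mu}\langle u_t,\textstyle\int_0^x\phi_t(y)dy\rangle,
\end{equation*}
and then substitute $\rho u_{tt}=\mu u_{xx}+b\phi_x$ in the two terms carrying $u_{tt}$, so that every second derivative in time is traded for spatial terms that can be integrated by parts.

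After the substitution I would integrate by parts using the Dirichlet conditions $u(0)=u(\pi)=0$: the term $-\langle\mu u_{xx},u\rangle$ becomes $\mu\|u_x\|^2$ and $-\langle b\phi_x,u\rangle$ becomes $b\langle\phi,u_x\rangle$. The decisive step is the treatment of $-\frac{b}{\mu}\langle\mu u_{xx}+b\phi_x,\int_0^x\phi\,dy\rangle$: the boundary terms produced by integration by parts are evaluated at $x=0$, where $\int_0^x\phi\,dy=0$ trivially, and at $x=\pi$, where they vanish precisely because of the mean-zero normalization $\int_0^\pi\phi\,dx=0$ enforced by passing to $\overline{\phi}$. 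This contribution reduces to $b\langle u_x,\phi\rangle+\frac{b^2}{\mu}\|\phi\|^2$. Collecting the surviving quadratic terms yields
\begin{equation*}
\frac{d}{dt}F_4(t)=-\rho\|u_t\|^2+\mu\|u_x\|^2+2b\langle\phi,u_x\rangle+\frac{b^2}{\mu}\|\phi\|^2-\frac{b\rho}{\mu}\langle u_t,\textstyle\int_0^x\phi_t(y)dy\rangle,
\end{equation*}
and by the algebraic identity underlying \eqref{6} the three middle terms combine exactly into the perfect square $\|\sqrt{\mu}\,u_x+\frac{b}{\sqrt{\mu}}\phi\|^2$.

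It then remains to control the last cross term. I would apply Young's inequality to split $-\frac{b\rho}{\mu}\langle u_t,\int_0^x\phi_t\,dy\rangle$ into $\frac{\rho}{2}\|u_t\|^2$ plus a multiple of $\|\int_0^x\phi_t\,dy\|^2$, and then bound $\|\int_0^x\phi_t\,dy\|^2\le M\|\phi_t\|^2$ via Cauchy--Schwarz (equivalently a Poincaré-type estimate on the primitive). Absorbing $\frac{\rho}{2}\|u_t\|^2$ into $-\rho\|u_t\|^2$ leaves $-\frac{\rho}{2}\|u_t\|^2$ and produces exactly \eqref{40}. I expect the only genuinely delicate point to be the bookkeeping of the boundary terms involving the primitive $\int_0^x\phi\,dy$: one must check carefully that the mean-zero property of $\phi$ is precisely what is needed to discard them, since an uncontrolled boundary contribution at $x=\pi$ would destroy the clean perfect-square structure on which the whole estimate rests.
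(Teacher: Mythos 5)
Your proposal is correct and follows essentially the same route as the paper: substitute $\rho u_{tt}=\mu u_{xx}+b\phi_x$, integrate by parts (with the boundary terms involving $\int_0^x\phi\,dy$ killed at $x=\pi$ by the mean-zero normalization of $\phi$, a point you handle more explicitly than the paper does), recombine into the perfect square via the identity behind \eqref{6}, and finish with Young's and Cauchy--Schwarz inequalities on the remaining cross term. No gaps.
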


\begin{proof}
	By differentiating $F_{4}(t)$ and using (\ref{1}), we arrive at 
	\begin{align*}
		\frac{d}{dt}F_{4}(t)=&-\rho \left\Vert u_{t}\right\Vert ^{2}-\left\langle \mu
		u_{xx}+b\phi _{x},u\right\rangle\\
		& -\dfrac{b}{\mu }\left\langle \mu
		u_{xx}+b\phi _{x},\int_{0}^{x}\phi (y)dy\right\rangle -\dfrac{b\rho }{\mu }%
		\left\langle u_{t},\int_{0}^{x}\phi _{t}(y)dy\right\rangle ,\\
		=&-\rho \left\Vert u_{t}\right\Vert ^{2}+\mu \left\Vert u_{x}\right\Vert
		^{2}+2b\left\langle u_{x},\phi \right\rangle +\dfrac{b^{2}}{\mu }\left\Vert
		\phi \right\Vert ^{2}-\dfrac{b\rho }{\mu }\left\langle
		u_{t},\int_{0}^{x}\phi _{t}(y)dy\right\rangle , \\
		=&-\rho \left\Vert u_{t}\right\Vert ^{2}+\left\Vert \sqrt{\mu }u_{x}+\dfrac{%
			b}{\sqrt{\mu }}\phi \right\Vert ^{2}-\dfrac{b\rho }{\mu }\left\langle
		u_{t},\int_{0}^{x}\phi _{t}(y)dy\right\rangle .
	\end{align*}%
	Young and Cauchy Schwarz inequalities, give%
	\begin{equation*}
		\dfrac{b\rho }{a}\left\langle u_{t},\int_{0}^{x}\phi _{t}(y)dy\right\rangle
		\leq \dfrac{\rho }{2}\left\Vert u_{t}\right\Vert ^{2}+M\left\Vert \phi
		_{t}\right\Vert ^{2}
	\end{equation*}%
	and (\ref{40}) follows immediately.
\end{proof}

At this state, we introduce the Lyapunov functional%
\begin{equation*}
	\mathscr{L}(t)=NE\left( t\right) +N_{1}F_{1}\left( t\right) +\frac{1}{%
		\varepsilon _{2}}F_{2}\left( t\right) +\frac{\rho }{4\varepsilon _{3}}%
	F_{3}\left( t\right) +F_{4}(t),
\end{equation*}%
where, $N$ and $N_{1}$ are positive constants the will be determine later.

By differentiating $\mathscr{L}(t)$ and using (\ref{25}),(\ref{30}),(\ref{35}%
),(\ref{40}) we get

\begin{align*}
	\frac{d}{dt}\mathscr{L}(t) \leq &-\left( \frac{\rho }{8\varepsilon _{3}}%
	-2\right) \left\Vert \sqrt{\mu }u_{x}+\frac{b}{\sqrt{\mu }}\phi \right\Vert
	^{2} \\
	& -\left[ \left( \xi -\frac{b^{2}}{\mu }\right) \frac{\rho }{4\varepsilon
		_{3}}-1\right] \left\Vert \phi \right\Vert ^{2}-\frac{\rho }{4}\left\Vert
	u_{t}\right\Vert ^{2}-\left( \frac{\rho \alpha }{8\varepsilon _{3}}-1\right)
	\left\Vert \phi _{x}\right\Vert ^{2} \\
	& -\left( \frac{J}{2\varepsilon _{2}}-\varepsilon _{1}N_{1}-M\left( \frac{%
		\rho }{4\varepsilon _{3}^{2}}+\frac{\rho }{4\varepsilon _{3}}+1\right)
	\right) \left\Vert \phi _{t}\right\Vert ^{2} \\
	& -\left( cN_{1}-M\left( 1+\frac{1}{\varepsilon _{2}}\right) \frac{1}{%
		\varepsilon _{2}}-M\frac{\rho }{4\varepsilon _{3}}\right) \left\Vert \theta
	\right\Vert ^{2}-N_{1}\left\Vert \eta \right\Vert _{\mathcal{V}}^{2} \\
	& +\left( N-MN_{1}\left( 1+\frac{1}{\varepsilon _{1}}\right) -\frac{M}{%
		\varepsilon _{2}}-\frac{M\rho }{4\varepsilon _{3}}\left( 1+\varepsilon
	_{3}\right) \right) \int_{0}^{+\infty }\kappa ^{\prime }(s)\left\Vert \eta
	_{x}\right\Vert ^{2}ds
\end{align*}%
First we choose $\varepsilon _{3}$ small enough such that 
\begin{equation*}
	\varepsilon _{3}\leq \min \left\{ \frac{\rho }{16},\frac{\rho \alpha }{8}%
	,\left( \xi -\frac{b^{2}}{\mu }\right) \frac{\rho }{4}\right\} .
\end{equation*}%
Next, we choose%
\begin{equation*}
	\varepsilon _{2}=\frac{2J\varepsilon _{3}^{2}}{M\left[ \rho \varepsilon
		_{3}+\rho +4\varepsilon _{3}^{2}\right] },
\end{equation*}%
to get%
\begin{equation*}
	\frac{J}{2\varepsilon _{2}}-M\left( \frac{\rho }{4\varepsilon _{3}^{2}}+%
	\frac{\rho }{4\varepsilon _{3}}+1\right) =M\left( \frac{\rho }{4\varepsilon
		_{3}^{2}}+\frac{\rho }{4\varepsilon _{3}}+1\right) .
\end{equation*}%
After that, we pick $N_{1}$ large such that%
\begin{equation*}
	cN_{1}>M\left[ \left( 1+\frac{1}{\varepsilon _{2}}\right) \frac{1}{%
		\varepsilon _{2}}+\frac{\rho }{4\varepsilon _{3}}\right] .
\end{equation*}%
Next, we choose $\varepsilon _{1}$ small enough such that%
\begin{equation*}
	M\left( \frac{\rho }{4\varepsilon _{3}^{2}}+\frac{\rho }{4\varepsilon _{3}}%
	+1\right) >\varepsilon _{1}N_{1}.
\end{equation*}%
Thus, there exist $\varpi ,\lambda >0,$ such that%
\begin{eqnarray}
	\frac{d}{dt}\mathscr{L}(t) &\leq &-\varpi \left(\left\Vert \phi \right\Vert
	^{2}+\left\Vert u_{t}\right\Vert ^{2}+\left\Vert \phi _{x}\right\Vert
	^{2}+\left\Vert \phi _{t}\right\Vert ^{2}+\left\Vert \theta \right\Vert
	^{2}+\left\Vert \eta \right\Vert ^{2}\right)  \notag \\
	&&-\varpi  \left\Vert \sqrt{\mu }u_{x}+%
	\frac{b}{\sqrt{\mu }}\phi \right\Vert ^{2}
	+\left( N-\lambda \right) \int_{0}^{\infty }\kappa ^{^{\prime
	}}(s)\left\Vert \eta _{x}\right\Vert ^{2}ds.  \label{34}
\end{eqnarray}%
On the other hand, let 
\begin{equation*}
	\mathscr{K}\left( t\right) :=\mathscr{L}\left( t\right) -NE\left( t\right)
	=N_{1}F_{1}\left( t\right) +\frac{1}{\varepsilon _{2}}F_{2}\left( t\right) +%
	\frac{\rho }{4\varepsilon _{3}}F_{3}\left( t\right) +F_{4}(t),
\end{equation*}%
then,%
\begin{align*}
	\left\vert \mathscr{K}\left( t\right) \right\vert \leq &C\left[ \left\vert
	\int_{0}^{+\infty }\kappa (s)\left\langle \theta (t),\eta
	^{t}(s)\right\rangle ds\right\vert +\left\vert \left\langle \theta
	,\int_{0}^{x}\phi _{t}(y)dy\right\rangle \right\vert\right]\\
	 +&C\left[\left\vert
	\left\langle \phi _{x},u_{t}\right\rangle \right\vert +\left\vert
	\left\langle \phi _{t},u_{x}\right\rangle \right\vert +\left\vert
	\left\langle \phi ,\phi _{t}\right\rangle \right\vert + \left\vert \left\langle u_{t},u\right\rangle \right\vert
	+\left\vert \left\langle u_{t},\int_{0}^{x}\phi (y)dy\right\rangle
	\right\vert \right]\\
	+&C\left[ \left\vert \left\langle \int_{0}^{\infty }\kappa (s)\eta
	_{x}(s)ds,\sqrt{\mu }u_{x}+\frac{b}{\sqrt{\mu }}\phi \right\rangle
	\right\vert +\left\vert \left\langle \theta ,u_{t}\right\rangle \right\vert %
	\right]\\
\end{align*}%
Therefore, Cauchy-Schwarz and Young's inequalities yield%
\begin{equation*}
	\left\vert \mathscr{L}\left( t\right) -NE\left( t\right) \right\vert \leq
	CE\left( t\right) ,
\end{equation*}%
that is%
\begin{equation*}
	\left( N-C\right) E\left( t\right) \leq \mathscr{L}\left( t\right) \leq
	\left( N-C\right) E\left( t\right) .
\end{equation*}%
Now, we choose $N$ large enough such that $N>\max \left\{ C,\lambda \right\}
.$ Then, $\mathscr{L}\left( t\right) \sim E\left( t\right) $ and from (\ref%
{34}) we infer that there exist $\gamma >0$ such that 
\begin{equation*}
	\frac{d}{dt}\mathscr{L}(t)\leq -\gamma E\left( t\right) ,\;t\geq 0.
\end{equation*}%
Using the fact that $L\left( t\right) \sim E\left( t\right) ,$ we get%
\begin{equation*}
	\frac{d}{dt}\mathscr{L}(t)\leq -\omega L\left( t\right) ,\;t\geq 0,
\end{equation*}%
where, $\omega >0.$ Therefore,%
\begin{equation*}
	\mathscr{L}\left( t\right) \leq \mathscr{L}\left( 0\right) e^{-\omega
		t},\;t\geq 0.
\end{equation*}%
Again, the fact that $\mathscr{L}\left( t\right) \sim E\left( t\right) ,$
infer that%
\begin{equation*}
	E\left( t\right) \leq \sigma e^{-\omega t},\;t\geq 0,
\end{equation*}%
which completes the proof of Theorem \ref{TH3}.

\section{Lack of exponential stability}

In this section we examine the cases when $\gamma _{g}=0$ or $\chi _{g}\neq
0 $ and prove that the solution $(u,\phi ,\theta ,\eta )$ loses its
exponential stability, that is:

\begin{theorem}
	\label{TH4} Suppose that $\gamma _{g}=0,$ or $\gamma _{g}\neq 0$ and $\chi
	_{g}\neq 0,$ then the solution $\left( u,\phi ,\theta ,\eta \right) $ can
	not be exponentially stable.
\end{theorem}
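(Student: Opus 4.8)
The natural tool is the Gearhart--Prüss--Huang frequency-domain criterion: since $S(t)=e^{-\mathcal{A}t}$ is a contraction semigroup on the Hilbert space $\mathcal{H}$, it is exponentially stable if and only if $i\mathbb{R}\subset\rho(-\mathcal{A})$ and
\begin{equation*}
\sup_{\lambda\in\mathbb{R}}\big\|(i\lambda I+\mathcal{A})^{-1}\big\|_{\mathcal{L}(\mathcal{H})}<+\infty.
\end{equation*}
To disprove exponential decay it is therefore enough to produce a real sequence $\lambda_{n}$ with $|\lambda_{n}|\to\infty$, a sequence $F_{n}\in\mathcal{H}$ bounded in $\mathcal{H}$, and solutions $U_{n}\in D(\mathcal{A})$ of $(i\lambda_{n}I+\mathcal{A})U_{n}=F_{n}$ whose norms $\|U_{n}\|_{\mathcal{H}}$ blow up; this contradicts the uniform bound on the resolvent along the imaginary axis. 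As a preliminary step I would verify $i\mathbb{R}\subset\rho(-\mathcal{A})$ by the same separation of variables used below, so that the failure is genuinely one of \emph{uniform} boundedness rather than of a spectral point on the axis.

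The plan is to build the sequence by separation of variables, exploiting that $A=-D^{2}$ has eigenfunctions $\sin(nx)$ (Dirichlet, for $u$ and $\theta$) and $\cos(nx)$ (Neumann, for $\phi$). I would seek resolvent solutions of the form
\begin{equation*}
u_{n}=A_{n}\sin(nx),\quad \phi_{n}=B_{n}\cos(nx),\quad \theta_{n}=C_{n}\sin(nx),
\end{equation*}
with $v_{n}=i\lambda_{n}u_{n}$, $\psi_{n}=i\lambda_{n}\phi_{n}$, and $\eta_{n}$ obtained by solving the transport equation $i\lambda_{n}\eta_{n}+\partial_{s}\eta_{n}=\theta_{n}$ under $\eta_{n}(0)=0$, which gives $\eta_{n}(s)=\frac{1-e^{-i\lambda_{n}s}}{i\lambda_{n}}\theta_{n}$. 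Substituting this, the convolution term $\int_{0}^{\infty}\kappa(s)\partial_{xx}\eta_{n}(s)\,ds$ becomes $\widehat{\kappa}(\lambda_{n})\,\partial_{xx}\theta_{n}$ with $\widehat{\kappa}(\lambda)=\int_{0}^{\infty}\kappa(s)\frac{1-e^{-i\lambda s}}{i\lambda}\,ds$; by the Riemann--Lebesgue lemma together with (h1)--(h3) one has the crucial asymptotics $\widehat{\kappa}(\lambda)\sim g(0)/(i\lambda)$ as $|\lambda|\to\infty$. The three remaining equations then collapse to a $3\times3$ system for $(A_{n},B_{n},C_{n})$ with coefficient matrix
\begin{equation*}
\begin{pmatrix}
\mu n^{2}-\rho\lambda_{n}^{2} & bn & 0\\
bn & \alpha n^{2}+\xi-J\lambda_{n}^{2} & \beta n\\
0 & -i\beta\lambda_{n}n & ic\lambda_{n}+n^{2}\widehat{\kappa}(\lambda_{n})
\end{pmatrix}.
\end{equation*}

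I would then choose the resonant frequency tied to the elastic wave speed, $\rho\lambda_{n}^{2}=\mu n^{2}$, so that the $(1,1)$ entry vanishes; a direct computation using $\widehat{\kappa}(\lambda_{n})\sim g(0)/(i\lambda_{n})$ shows that the leading order of the $(3,3)$ entry equals $\frac{i n^{2}}{\rho\lambda_{n}}\gamma_{g}$, which collapses precisely when $\gamma_{g}=0$. Taking $F_{n}$ concentrated in a single component and normalised so that $\|F_{n}\|_{\mathcal{H}}$ stays bounded, I would invert the system by Cramer's rule and track the leading powers of $n$: when $\gamma_{g}=0$ the degeneracy of the $(3,3)$ entry at the elastic resonance lets one amplitude grow without bound; when $\gamma_{g}\neq0$ but $\chi_{g}\neq0$ the residual mismatch of the hyperbolic speeds $\mu/\rho$ and $\alpha/J$, corrected by the thermal term $\rho\beta^{2}/(\alpha\gamma_{g})$, \emph{is} exactly $\chi_{g}$ and prevents the determinant from cancelling the growth. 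In either case $\|U_{n}\|_{\mathcal{H}}\to\infty$ while $\|F_{n}\|_{\mathcal{H}}$ remains bounded, which is the desired contradiction.

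The main obstacle will be the asymptotic bookkeeping in the two regimes. One must expand $\widehat{\kappa}(\lambda_{n})$ beyond the leading term $g(0)/(i\lambda_{n})$, since the subleading correction $-\widetilde{\kappa}(\lambda_{n})/(i\lambda_{n})$ (with $\widetilde{\kappa}(\lambda)=\int_{0}^{\infty}\kappa(s)e^{-i\lambda s}ds\to0$) carries the dissipation and must be weighed against $\gamma_{g}$ and $\chi_{g}$; one must calibrate the exact rate of $\lambda_{n}$ so the competing terms in the determinant balance; and one must control the contribution of $\eta_{n}$ to $\|U_{n}\|_{\mathcal{H}}^{2}=\cdots+\|\eta_{n}\|_{\mathcal{V}}^{2}$, where $\|\eta_{n}\|_{\mathcal{V}}^{2}=\int_{0}^{\infty}\kappa(s)\|\partial_{x}\eta_{n}(s)\|^{2}ds$ scales like $n^{2}|C_{n}|^{2}/\lambda_{n}^{2}$ and could otherwise mask the blow-up. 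Making these constants align exactly with the algebraic identities $\gamma_{g}=0$ and $\chi_{g}\neq0$ is the delicate part; the remainder is routine separation-of-variables estimation within the Gearhart--Prüss framework.
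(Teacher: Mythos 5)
Your proposal follows essentially the same route as the paper: the frequency-domain (Gearhart--Pr\"uss) characterization of exponential stability, a separation-of-variables ansatz $u_n=A_n\sin(nx)$, $\phi_n=B_n\cos(nx)$, $\theta_n=C_n\sin(nx)$ with the transport equation solved explicitly for $\eta_n$, reduction to the same $3\times3$ system, the resonant choice $\rho\lambda_n^2=\mu n^2$, and the asymptotics of the transformed kernel isolating $\gamma_g$ and $\chi_g$ in the leading coefficients. The paper carries out exactly this computation, showing $A_n$ tends to a nonzero constant (so $\|v_n\|=|\lambda_n A_n|\,\|\sin nx\|\to\infty$) when $\gamma_g\ne0$, $\chi_g\ne0$ or when exactly one of $\gamma_g$, $\rho/\mu-J/\alpha$ vanishes, and $A_n\to\infty$ when both vanish.
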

The proof is based on the following theorem
\begin{theorem}
	Let \ $\mathcal{B}$ (unbounded operator) be the infinitesimal generator of \
	a semigroup of contractions $S(t)=e^{\mathcal{B}t}.$ Then $S\left( t\right) $
	is exponentially stable if and only if there exists a positive constant $m$
	such that%
	\begin{equation}
		\underset{\lambda \in \mathbb{R}}{\inf }\Vert (i\lambda I-\mathcal{B})U\Vert
		\geq m\Vert U\Vert ,~\forall U\in D(\mathcal{B}).
	\end{equation}
\end{theorem}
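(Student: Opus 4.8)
The plan is to prove both implications, treating necessity as routine and sufficiency as the substantive part. Throughout write $R(\lambda):=(\lambda I-\mathcal B)^{-1}$, and use that $\mathcal B$, generating a contraction semigroup, is m-dissipative, so $\{\mathrm{Re}\,\lambda>0\}\subset\rho(\mathcal B)$ and $\mathrm{Re}\langle\mathcal Bx,x\rangle\le0$ for $x\in D(\mathcal B)$. For necessity, suppose $\|S(t)\|\le\sigma e^{-\omega t}$ with $\omega>0$. Then for each $\beta\in\mathbb R$ the integral $\int_0^{\infty}e^{-i\beta t}S(t)\,dt$ converges in operator norm and equals $R(i\beta)$, so $\|R(i\beta)\|\le\int_0^{\infty}\sigma e^{-\omega t}\,dt=\sigma/\omega$. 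Writing $U=R(i\beta)(i\beta I-\mathcal B)U$ for $U\in D(\mathcal B)$ gives $\|U\|\le(\sigma/\omega)\|(i\beta I-\mathcal B)U\|$, and taking the infimum over $\beta$ yields the estimate with $m=\omega/\sigma$.

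For sufficiency I would first upgrade the lower bound to a resolvent statement on the imaginary axis. The hypothesis says each $i\beta I-\mathcal B$ is bounded below, hence injective with closed range; moreover, for $0<\varepsilon<m$ one has $\|(\varepsilon+i\beta)U-\mathcal BU\|\ge\|(i\beta I-\mathcal B)U\|-\varepsilon\|U\|\ge(m-\varepsilon)\|U\|$, so the resolvent is uniformly bounded on the strip $0<\mathrm{Re}\,\lambda<m$ by $1/(m-\varepsilon)$. Since $\varepsilon+i\beta\in\rho(\mathcal B)$ for $\varepsilon>0$ and the resolvent stays bounded as $\varepsilon\to0^+$, the inequality $\|R(\lambda)\|\ge1/\mathrm{dist}(\lambda,\sigma(\mathcal B))$ forces $\sigma(\mathcal B)$ to remain at distance $\ge m/2$ from $\varepsilon+i\beta$; letting $\varepsilon\to0$ shows $i\beta\in\rho(\mathcal B)$ with $\|R(i\beta)\|\le1/m$. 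Thus $i\mathbb R\subset\rho(\mathcal B)$ and $M:=\sup_{\beta}\|R(i\beta)\|\le1/m<\infty$.

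The second stage converts this frequency bound into time decay via Plancherel. For $U_0\in D(\mathcal B)$ and $\varepsilon>0$, the map $t\mapsto e^{-\varepsilon t}S(t)U_0$ lies in $L^2(0,\infty;\mathcal H)$ with Fourier transform $\beta\mapsto R(\varepsilon+i\beta)U_0$, so Parseval gives $\int_0^\infty e^{-2\varepsilon t}\|S(t)U_0\|^2\,dt=\tfrac1{2\pi}\int_{\mathbb R}\|R(\varepsilon+i\beta)U_0\|^2\,d\beta$. The resolvent identity $R(\varepsilon+i\beta)U_0=R(i\beta)U_0-\varepsilon R(\varepsilon+i\beta)R(i\beta)U_0$ bounds the integrand by $4\|R(i\beta)U_0\|^2$ for $\varepsilon\le m/2$, uniformly in $\varepsilon$; and the identity $R(i\beta)U_0=\tfrac1{i\beta}U_0+\tfrac1{i\beta}R(i\beta)\mathcal BU_0$ gives $\|R(i\beta)U_0\|\le|\beta|^{-1}(\|U_0\|+M\|\mathcal BU_0\|)$ for $|\beta|\ge1$ (while $R$ is bounded near $\beta=0$), so $\beta\mapsto\|R(i\beta)U_0\|^2$ is integrable. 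Letting $\varepsilon\to0^+$ by monotone convergence yields $\int_0^\infty\|S(t)U_0\|^2\,dt<\infty$; after extending this to all of $\mathcal H$ and invoking the Datko--Pazy theorem, one obtains $\|S(t)\|\le\sigma e^{-\omega t}$.

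The hardest points are the first stage—promoting the lower bound to genuine invertibility of $i\beta I-\mathcal B$, i.e.\ excluding spectrum on the imaginary axis rather than merely establishing injectivity—and the density step in the second stage: the frequency estimate controls $\int_0^\infty\|S(t)U_0\|^2\,dt$ only by the graph norm of $U_0$, and closing the gap to a uniform $\mathcal H$-bound, so that Datko's criterion applies to every initial datum, is the delicate part that relies essentially on the Hilbert-space Plancherel identity unavailable in general Banach spaces.
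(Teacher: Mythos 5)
The paper offers no proof of this statement at all: it is quoted (without even a reference) as a known tool — it is the Gearhart--Pr\"uss--Huang criterion — so your attempt can only be measured against the standard literature proofs, and that is indeed the route you take. Your necessity argument is correct, and so is the first stage of sufficiency: since $\mathcal B$ is m-dissipative, $\varepsilon+i\beta\in\rho(\mathcal B)$ for $\varepsilon>0$, the lower bound gives $\|R(\varepsilon+i\beta)\|\le(m-\varepsilon)^{-1}$, and the inequality $\|R(\lambda)\|\ge 1/\mathrm{dist}(\lambda,\sigma(\mathcal B))$ then keeps the spectrum at distance $m-\varepsilon$ from $\varepsilon+i\beta$, so letting $\varepsilon\to0$ yields $i\mathbb R\subset\rho(\mathcal B)$ with $\sup_\beta\|R(i\beta)\|\le 1/m$; this is the right way to promote the lower bound to invertibility. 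The Plancherel computation is likewise sound and gives $\int_0^\infty\|S(t)U_0\|^2\,dt\le C\left(\|U_0\|^2+\|\mathcal B U_0\|^2\right)$ for $U_0\in D(\mathcal B)$.

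The genuine gap is exactly the step you defer with ``after extending this to all of $\mathcal H$ and invoking the Datko--Pazy theorem.'' No density argument can supply that extension, because your estimate is in the graph norm, and square-integrability of orbits on $D(\mathcal B)$ with a graph-norm bound is \emph{strictly weaker} than exponential stability. Concretely, take the diagonal contraction semigroup on $\ell^2$ with $S(t)e_n=e^{(-1/n+in)t}e_n$: every $x\in D(\mathcal B)$ satisfies $\int_0^\infty\|S(t)x\|^2\,dt=\sum_n\tfrac n2|x_n|^2\le C\|x\|\,\|x\|_{D(\mathcal B)}$, yet $\|S(t)\|=\sup_n e^{-t/n}=1$ for all $t$, so the semigroup is not exponentially stable (of course $\|R(in)\|\sim n$ here, so the hypothesis fails — which shows the uniform resolvent bound must re-enter the argument at this point rather than mere denseness of $D(\mathcal B)$). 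The standard ways to close the gap: (a) Pr\"uss's argument, which applies your Plancherel identity to the inhomogeneous convolution rather than to orbits: for compactly supported $f$, $u=S\ast f$ satisfies $\widehat{e^{-\varepsilon\cdot}u}(\beta)=R(\varepsilon+i\beta)\widehat{e^{-\varepsilon\cdot}f}(\beta)$, whence $\|u\|_{L^2}\le\left(\sup_{\mathrm{Re}\,\lambda>0}\|R(\lambda)\|\right)\|f\|_{L^2}\le\tfrac2m\|f\|_{L^2}$ with no graph norm, and Datko's characterization of exponential stability via boundedness of $f\mapsto S\ast f$ on $L^2(\mathbb R_+;\mathcal H)$ concludes; or (b) smoothing the datum, $U_0=R(1,\mathcal B)x$, where the resolvent identity $R(i\beta)R(1,\mathcal B)x=(1-i\beta)^{-1}\left(R(i\beta)x-R(1,\mathcal B)x\right)$ gives $\|R(i\beta)R(1,\mathcal B)x\|\le C\|x\|(1+|\beta|)^{-1}$, hence $\int_0^\infty\|S(t)R(1,\mathcal B)x\|^2dt\le C\|x\|^2$ for \emph{all} $x\in\mathcal H$; pairing with the identical estimate for the adjoint semigroup through $t\langle S(t)R^2x,y\rangle=\int_0^t\langle S(t-s)Rx,\,S^*(s)R^*y\rangle\,ds$ yields $\|S(t)R(1,\mathcal B)^2\|\le C/t$, and one finishes as in Arendt--Batty--Hieber--Neubrander (Thm.~5.2.1) or Engel--Nagel (Thm.~V.1.11). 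As written, your proposal is correct through the resolvent bound and the $D(\mathcal B)$-orbit estimate, but the decisive final step is named rather than proved, and as stated (dense-subspace Datko) it is false.
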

\begin{proof}[Proof of Theorem \ref{TH4}]
	
	Taking $F_{n}=(0,\frac{\sin (n\pi )}{\rho },0,0,0,0)$, then 
	\begin{equation*}
		\Vert F_{n}\Vert _{\mathscr{H}}=\sqrt{\frac{\pi }{2\rho }}.
	\end{equation*}%
	Let $U_{n}\in D(\mathscr{A})$ be the solution of $(i\lambda _{n}I+\mathscr{A}%
	)U_{n}=F_{n}$, then 
	\begin{equation}
		\left\{ 
		\begin{array}{rl}
			i\lambda _{n}u_{n}-v_{n}= & 0 \\ 
			i\rho \lambda _{n}v_{n}-\mu u_{nxx}-b\phi _{nx}= & \sin (n\pi ), \\ 
			i\lambda _{n}\phi _{n}-\psi _{n}= & 0, \\ 
			iJ\lambda _{n}\psi _{n}-\alpha \phi _{nxx}+\xi \phi _{n}+bu_{nx}+\beta
			\theta _{nx}= & 0, \\ 
			ic\lambda _{n}\theta _{n}+\beta \psi _{nx}-\displaystyle\int_{0}^{+\infty
			}\kappa (s)\eta _{nxx}(s)ds= & 0, \\ 
			i\lambda _{n}\eta _{n}-\theta _{n}+\eta _{ns}= & 0%
		\end{array}%
		\right.   \label{41}
	\end{equation}%
	Simplifying $v_{n},\psi _{n}$ from (\ref{41}), we obtain 
	\begin{equation}
		\left\{ 
		\begin{array}{rl}
			\rho \lambda _{n}^{2}u_{n}+\mu u_{nxx}+b\phi _{nx}= & -\sin (n\pi ), \\ 
			(J\lambda _{n}^{2}-\xi )\phi _{n}+\alpha \phi _{nxx}-bu_{nx}-\beta \theta
			_{nx}= & 0, \\ 
			ic\lambda _{n}\theta _{n}+i\lambda _{n}\beta \phi _{nx}-\displaystyle%
			\int_{0}^{+\infty }\kappa (s)\eta _{nxx}(s)ds= & 0, \\ 
			i\lambda _{n}\eta _{n}+\theta _{n}-\eta _{ns}= & 0%
		\end{array}%
		\right.   \label{42}
	\end{equation}%
	Taking into account the boundary conditions (\ref{3}), we set 
	\begin{equation}
		\left\{ 
		\begin{array}{l}
			u_{n}=A_{n}\sin (nx), \\ 
			\phi _{n}=B_{n}\cos (nx), \\ 
			\theta _{n}=C_{n}\sin (nx), \\ 
			\eta _{n}=D_{n}(s)\sin (nx),%
		\end{array}%
		\right.   \label{42'}
	\end{equation}%
	where, $A_{n},B_{n},C_{n}\in \mathbb{C}$ and $D_{n}:\mathbb{R}%
	_{+}\rightarrow \mathbb{C}$ such that $\displaystyle\int_{0}^{\infty }\kappa
	(s)|D_{n}(s)|^{2}ds<0$, and $D_{n}(0)=0.$ Plugging (\ref{42'}) into (\ref{42}%
	), \ we infer
	\begin{equation}
		\left\{ 
		\begin{array}{rl}
			\left( \rho \lambda _{n}^{2}-\mu n^{2}\right) A_{n}-bnB_{n}= & 1, \\ 
			(J\lambda _{n}^{2}-\xi -\alpha n^{2})B_{n}-bnA_{n}-\beta nC_{n}= & 0, \\ 
			ic\lambda _{n}C_{n}-i\beta n\lambda _{n}B_{n}+n^{2}\displaystyle%
			\int_{0}^{+\infty }\kappa (s)D_{n}(s)ds= & 0, \\ 
			i\lambda _{n}D_{n}\left( s\right) +C_{n}-D_{n}^{\prime }\left( s\right) = & 
			0.%
		\end{array}%
		\right.   \label{43}
	\end{equation}%
	Solving (\ref{43})$_{4}$, we get%
	\begin{equation*}
		D_{n}\left( s\right) =\frac{C_{n}}{i\lambda _{n}}\left( 1-e^{-i\lambda
			_{n}s}\right) .
	\end{equation*}%
	The substitution of $D_{n}\left( s\right) $ into (\ref{43})$_{3},$ entails
	
	\begin{equation*}
		\begin{array}{rl}
			\left( \rho \lambda _{n}^{2}-\mu n^{2}\right) A_{n}-bnB_{n} & =-1 \\ 
			(J\lambda _{n}^{2}-\xi -\alpha n^{2})B_{n}-bnA_{n}-\beta nC_{n} & =0 \\ 
			\beta n\lambda _{n}^{2}B_{n}-\left[ c\lambda _{n}^{2}-n^{2}\left( g\left(
			0\right) -\widehat{\kappa }\left( \lambda _{n}\right) \right) \right] C_{n}
			& =0%
		\end{array}%
	\end{equation*}%
	which can be written as%
	\begin{equation}
		\left( 
		\begin{array}{ccc}
			p_{1}\left( \lambda \right)  & -bn & 0 \\ 
			-bn & p_{2}\left( \lambda \right)  & -\beta n \\ 
			0 & \beta n\lambda _{n}^{2} & p_{3}\left( \lambda \right) 
		\end{array}%
		\right) \left( 
		\begin{array}{c}
			A_{n} \\ 
			B_{n} \\ 
			C_{n}%
		\end{array}%
		\right) =\left( 
		\begin{array}{c}
			-1 \\ 
			0 \\ 
			0%
		\end{array}%
		\right) 
	\end{equation}%
	where,
	
	\begin{equation*}
		p_{1}\left( \lambda \right) =\left( \rho \lambda _{n}^{2}-\mu n^{2}\right) ,%
		\text{ \ \ }p_{2}\left( \lambda \right) =(J\lambda _{n}^{2}-\xi -\alpha
		n^{2}),\text{ \ \ }p_{3}\left( \lambda \right) =-c\lambda
		_{n}^{2}+n^{2}\left( g\left( 0\right) -\widehat{\kappa }\left( \lambda
		_{n}\right) \right) .
	\end{equation*}%
	It follows that%
	\begin{equation*}
		A_{n}=\frac{-K_{1}}{p_{1}\left( \lambda _{n}\right) K_{1}-K_{2}},
	\end{equation*}%
	where%
	\begin{equation*}
		K_{1}=p_{2}\left( \lambda _{n}\right) p_{3}\left( \lambda _{n}\right)
		+\left( \beta n\lambda _{n}\right) ^{2},\;K_{2}=\left( bn\right)
		^{2}p_{3}\left( \lambda _{n}\right) .
	\end{equation*}%
	Taking $\lambda_{n}$ such that $p_{1}\left( \lambda _{n}\right) =0,$ that is $n^{2}=\frac{\rho }{\mu 
	}\lambda _{n}^{2},$ then
	
	\begin{equation}
		A_{n}=\frac{K_{1}}{K_{2}}.  \label{46}
	\end{equation}%
	where,
	
	\begin{equation*}
		K_{1}=\frac{\alpha }{\mu }\left( \frac{\rho }{\mu }-\frac{J}{\alpha }+\frac{%
			\xi }{\alpha \lambda _{n}^{2}}\right) \left[ c\mu -\rho g\left( 0\right)
		+\rho \widehat{\kappa }\left( \lambda _{n}\right) \right] \lambda _{n}^{4}+%
		\frac{\rho \beta ^{2}}{\mu }\lambda _{n}^{4},
	\end{equation*}%
	as $\frac{\xi }{\alpha \lambda _{n}^{2}}\longrightarrow 0,$ then%
	\begin{equation}\label{47}
		K_{1}\simeq \frac{\alpha }{\mu }\left[ \left( \frac{\rho }{\mu }-\frac{J}{%
			\alpha }\right) \gamma _{g}+\frac{\rho \beta ^{2}}{\alpha }+\left( \frac{%
			\rho }{\mu }-\frac{J}{\alpha }\right) \rho \widehat{\kappa }\left( \lambda
		_{n}\right) \right] \lambda _{n}^{4} 
	\end{equation}%
	and%
	\begin{eqnarray}
		K_{2} &=&\frac{b^{2}\rho }{\mu ^{2}}\lambda _{n}^{4}\left( -c\mu +\rho
		g\left( 0\right) -\rho \widehat{\kappa }\left( \lambda _{n}\right) \right)  
		\notag \\
		&=&-\frac{b^{2}\rho }{\mu ^{2}}\left( \gamma _{g}+\rho \widehat{\kappa }%
		\left( \lambda _{n}\right) \right) \lambda _{n}^{4}.  \label{48}
	\end{eqnarray}%
	Substituting (\ref{47}) and (\ref{48}) into (\ref{46}) we get
	
	\begin{equation*}
		A_{n}\simeq -\frac{\alpha \mu \left[ \chi _{g}\gamma _{g}+\left( \frac{\rho 
			}{\mu }-\frac{J}{\alpha }\right) \rho \widehat{\kappa }\left( \lambda
			_{n}\right) \right] }{b^{2}\rho \left( \gamma _{g}+\rho \widehat{\kappa }%
			\left( \lambda _{n}\right) \right) }.
	\end{equation*}
	\begin{itemize}
		\item If $\dfrac{\rho }{\mu }-\dfrac{J}{\alpha }=0$ and $\gamma _{g}\neq 0,$  since  $\underset{n\longrightarrow \infty }{\lim }\widehat{\kappa }\left(
		\lambda _{n}\right) =0,$ we have
		\begin{equation*}
			A_{n}\simeq \frac{\beta ^{2}\mu }{-b^{2}\left( \gamma _{g}+\rho \widehat{%
					\kappa }\left( \lambda _{n}\right) \right) }\longrightarrow \frac{\beta
				^{2}\mu }{-b^{2}\gamma _{g}}=A\neq 0,
		\end{equation*}%
		where $A$ is a constant.
		
		\item If $\dfrac{\rho }{\mu }-\dfrac{J}{\alpha }\neq 0$ and $\gamma _{g}=0,$ then%
		\begin{equation*}
			A_{n}\simeq -\frac{\alpha \mu \left( \frac{\rho }{\mu }-\frac{J}{\alpha }%
				\right) }{b^{2}\rho }=A\neq 0.
		\end{equation*}%
		\item If $\gamma _{g}=0$ and $\dfrac{\rho }{\mu }-\dfrac{J}{\alpha }=0,$ then 
		\begin{equation*}
			K_{1}=\frac{\rho \xi }{\mu }\widehat{\kappa }\left( \lambda _{n}\right)
			\lambda _{n}^{2}+\frac{\rho \beta ^{2}}{\mu }\lambda _{n}^{4},\;K_{2}=-\frac{%
				b^{2}\rho ^{2}}{\mu ^{2}}\widehat{\kappa }\left( \lambda _{n}\right) \lambda
			_{n}^{4}
		\end{equation*}%
		then%
		\begin{equation*}
			A_{n}\simeq \frac{\mu \beta ^{2}}{-b^{2}\rho \widehat{\kappa }\left( \lambda
				_{n}\right) }\longrightarrow \infty .
		\end{equation*},
		since $\underset{n\longrightarrow \infty }{\lim }\widehat{\kappa }\left(
		\lambda _{n}\right) =0.$
		\item If $\gamma _{g}\neq 0$, and $\chi _{g}\neq 0,$ then \ 
		\begin{equation*}
			A_{n}\simeq -\frac{\alpha \mu \left[ \chi _{g}+\left( \frac{\rho }{\mu }-%
				\frac{J}{\alpha }\right) \rho \frac{\widehat{\kappa }\left( \lambda
					_{n}\right) }{\gamma _{g}}\right] }{b^{2}\rho \left( 1+\rho \frac{\widehat{%
						\kappa }\left( \lambda _{n}\right) }{\gamma _{g}}\right) },
		\end{equation*}%
		and%
		\begin{equation*}
			A_{n}\longrightarrow -\frac{\alpha \mu \chi _{g}}{b^{2}\rho }=A\neq 0.
		\end{equation*}%
		
	\end{itemize}
	Therefore, if $A_{n}\longrightarrow A,$ we have
	\begin{align*}
		\left\Vert U_{n}\right\Vert ^{2}\geq  & \rho \int_{0}^{\pi }\left\vert
		v_{n}\right\vert ^{2}dx =\rho \int_{0}^{\pi }\left\vert \lambda
		_{n}u_{n}\right\vert ^{2}dx=\rho \int_{0}^{\pi }\left\vert \lambda _{n}A\sin
		nx\right\vert ^{2}dx, \\
		\geq & \rho \left\vert \lambda _{n}A\right\vert \int_{0}^{\pi }\left\vert
		\sin nx\right\vert ^{2}dx =\frac{\rho \pi \left\vert \lambda
			_{n}A\right\vert }{2}\longrightarrow \infty ,
	\end{align*}
	and if $A_{n}\longrightarrow \infty ,$ we have 
	\begin{equation*}
		\left\Vert U_{n}\right\Vert ^{2}\geq   \mu \int_{0}^{\pi }\left\vert
		u_{n}\right\vert ^{2}dx=\mu \int_{0}^{\pi }\left\vert A_{n}u_{n}\right\vert
		^{2}dx\longrightarrow \infty ,
	\end{equation*}%
	which completes the proof of Theorem \ref{TH4}.
	\end{proof}
	\section*{Declarations}

	\begin{itemize}
		\item {\bf Funding}: This work is funded by DGRSDT Algeria PRFU N° C00L03UN390120220004.
		\item {\bf Conflict of interest}: There is no conflict of interest.		
		\item {\bf Authors' contributions}: All the authors are contributed equally to this work
		\item {\bf Data Availability Statements}: Data sharing not applicable to this article as no datasets were generated or analysed during the current study since it is a theoretical study.
	
	\end{itemize}

\end{document}